\numberwithin{equation}{subsection}
\newtheorem{theorem}{Theorem}[section]
\newtheorem{corollary}[theorem]{Corollary}
\newtheorem{lemma}[theorem]{Lemma}
\newtheorem{remark}[theorem]{Remark}
\theoremstyle{plain}
\newcommand{\supp}{\operatorname{supp}}
\newcommand{\jdup}{\operatorname{jdup}}
\newcommand{\dup}{\operatorname{dup}}
\newcommand{\for}{\textrm{for}\,\,}
\title{Sparsity of Graphs that Allow Two Distinct Eigenvalues}
\author{Wayne Barrett
\thanks{Department of Mathematics, Brigham Young University, Provo, UT (wb@mathematics.byu.edu) }
\and Shaun Fallat
\thanks{Department of Mathematics and Statistics, University of Regina, Regina, Saskatchewan, CA (shaun.fallat@uregina.ca)}
\and Veronika Furst \thanks{Department of Mathematics, Fort Lewis College, Durango, CO (furst\_v@fortlewis.edu)}
\and Franklin Kenter \thanks{Department of Mathematics, U.S. Naval Academy, Annapolis, MD (kenter@usna.edu)}
\and Shahla Nasserasr \thanks{School of Mathematical Sciences, Rochester Institute of Technology, Rochester, NY (sxnsma@rit.edu)}
\and Brendan Rooney \thanks{School of Mathematical Sciences, Rochester Institute of Technology, Rochester, NY (brsma@rit.edu)} ~\thanks{Corresponding Author}
\and Michael Tait \thanks{Department of Mathematics \& Statistics, Villanova University, Villanova, PA (michael.tait@villanova.edu)}
\and Hein van der Holst \thanks{Department of Mathematics and Statistics, Georgia State University, Atlanta, GA (hvanderholst@gsu.edu)}
}
\date{\today}
\begin{document}

\maketitle

\begin{abstract}
The parameter $q(G)$ of a graph $G$ is the minimum number of distinct eigenvalues over the family of symmetric matrices described by $G$. It is shown that the minimum number of edges necessary for a connected graph $G$ to have $q(G)=2$ is $2n-4$ if $n$ is even, and 
$2n-3$ if $n$ is odd. In addition, a characterization of graphs for which equality is achieved in either case is given. 
\end{abstract}

\noindent {\bf Keywords} Graphs, eigenvalues, candles, orthogonality, $q$ parameter

\noindent {\bf AMS subject classification} 05C50, 15A29, 15A18

\section{Introduction} 

For a simple finite undirected graph $G=(V(G), E(G))$ on $n$ vertices we consider the set of $n\times n$ real symmetric matrices corresponding to $G$ as follows: 
\[\mathcal{S}(G)=\{A=[a_{i,j}]\,:\, a_{i,j}=0\,\, \textrm{for}\,\, i\neq j \iff \{i,j\}\notin E(G)\}.\]
The diagonal entries of $A$ have no restrictions, and the off-diagonal entries match the zero-pattern of the adjacency matrix $A(G)$ of $G$. The \emph{Inverse Eigenvalue Problem for Graphs (IEPG)} asks which spectra can occur for matrices $A\in \mathcal{S}(G)$. This is a difficult problem for almost all graphs. A subproblem of the IEPG is to study the multiplicities of the eigenvalues of matrices in $\mathcal{S}(G)$. 
If a symmetric $n\times n$ matrix $A$ has eigenvalues $\lambda_1\leq  \lambda_2\leq  \cdots\leq  \lambda_k$ with multiplicities $m_1, m_2,\ldots, m_k$, respectively, we say $A$ achieves the ordered multiplicity list $(m_1, m_2,\ldots, m_k)$. We write $\sigma(A)=\{\lambda_1^{(m_1)},\lambda_2^{(m_2)},\ldots, \lambda_k^{(m_k)}\}$ to denote the set of eigenvalues of $A$ with their multiplicities. 

If $(m_1, m_2,\ldots, m_k)$ is an ordered multiplicity list of an $n\times n$ matrix $A\in \mathcal{S}(G)$, then this list in non-decreasing order provides an integer partition of $n$ and is called a \emph{multiplicity partition} of $G$. 

An interesting question is to find the minimum length among all multiplicity partitions of $G$. This parameter is called the \emph{minimum  number of distinct eigenvalues of $G$} and is denoted by $q(G)$. 
For a given $n\times n$ matrix $A$, we use $q(A)$ to denote the number of distinct eigenvalues of $A$. Thus, for a graph $G$ we have 
\[q(G)=\min\{q(A)\,:\, A\in \mathcal{S}(G)\}.\]

Throughout the paper, we refer to graphs $G$ with $q(G) = k$ as graphs with (or that allow) $k$ distinct eigenvalues.  The only graphs with one distinct eigenvalue are graphs with no edges, as the corresponding matrix has to be a scalar multiple of the identity matrix. Graphs with two distinct eigenvalues have proven to be a very interesting class of graphs. For instance, it is known that a nontrivial graph $G$ satisfies $q(G)=2$ if and only if there is an orthogonal matrix in $\mathcal{S}(G)$; see \cite{MR3118943}. Moreover, every connected (this can easily be extended to include disconnected graphs as well) graph $G$ is an induced subgraph of a graph with two distinct eigenvalues; see \cite[Theorem 5.2]{MR3118943}. These indicate the difficulty of characterizing graphs with two distinct eigenvalues. In this article, we study the edge density of nontrivial connected graphs that have 2 distinct eigenvalues. In terms of matrices, this is equivalent to the study of the possible patterns, or the number, of nonzero entries of symmetric orthogonal matrices. Various questions regarding the nonzero pattern of orthogonal matrices, not necessarily symmetric, have been studied, see for example \cite{MR1240965}, \cite{MR1648304}, and \cite{MR429637}.

We use standard notation for graphs and matrices. Cycles and paths on $n$ vertices are denoted by $C_n$ and $P_n$, respectively. We use $K_n$ to denote the complete graph on $n$ vertices and $K_{m,n}$ for the complete bipartite graph with partitions of orders $m$ and $n$. Let $G=(V(G),E(G))$ be a graph and $u,v\in V(G)$. If $u$ and $v$ are adjacent, we use $\{u,v\}$, or $uv$ for short, to denote the edge connecting $u$ and $v$.
The set of vertices that are adjacent to $v$ is denoted by $N(v)$, and $N[v]=\{v\}\cup N(v).$ The \emph{distance} between $u$ and $v$ in the graph $G$, denoted by $d_G(u,v)$, is the number of edges in a shortest path between $u$ and $v$ in $G$. The set of vertices of distance $i$ from $v$ is denoted by $N_i(v)$. Thus, $N_0(v)=\{v\}$ and $N_1(v)=N(v)$ for a vertex $v\in V(G).$ If $U \subseteq V(G)$, then we let $G[U]$ denote the subgraph of $G$ induced by the vertices in $U$.

A well-known lower bound for $q(G)$ is obtained by finding a largest unique shortest path between two vertices. This is explained in the following lemma. 

\begin{lemma}\label{}[Theorem 3.2, \cite{MR3118943}]
If vertices $x,y$ of a graph $G$ are at distance $d$, and if there is only one path of length $d$ between $x$ and $y$, then $q(G)\geq d+1$. 
\end{lemma}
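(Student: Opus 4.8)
The plan is to fix a matrix $A \in \mathcal{S}(G)$ that attains $q(A) = q(G)$ and to show directly that $A$ must have at least $d+1$ distinct eigenvalues, by exploiting the interplay between powers of $A$ and walks in $G$. The central combinatorial fact I would invoke is that for any $m \geq 0$, the entry $(A^m)_{xy}$ equals a sum, over all walks $x = v_0, v_1, \ldots, v_m = y$ of length $m$ in $G$, of the products $a_{v_0 v_1} a_{v_1 v_2} \cdots a_{v_{m-1} v_m}$ of the corresponding off-diagonal entries of $A$.

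First I would record two consequences of this fact together with the distance hypothesis. Since $d_G(x,y) = d$, there is no walk of length $m$ from $x$ to $y$ for any $m < d$, so $(A^m)_{xy} = 0$ for all $0 \leq m < d$. Next, any walk of length exactly $d$ between two vertices at distance $d$ must in fact be a shortest path, hence a genuine path with no repeated vertices; by hypothesis there is exactly one such path, say with edges $e_1, \ldots, e_d$. Therefore $(A^d)_{xy}$ reduces to the single product $a_{e_1} a_{e_2} \cdots a_{e_d}$, and since each factor is a nonzero off-diagonal entry (the edges are present in $G$), we get $(A^d)_{xy} \neq 0$.

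With these entry computations in hand, I would argue by contradiction. Suppose $q(A) = k \leq d$. The number of distinct eigenvalues of $A$ equals the degree of its minimal polynomial, so $A$ satisfies a monic polynomial of degree $k$; consequently $A^k$, and hence every power $A^m$ with $m \geq k$, lies in the span of $I, A, A^2, \ldots, A^{k-1}$. In particular, because $d \geq k$, we may write $A^d = \sum_{m=0}^{k-1} c_m A^m$ for suitable scalars $c_m$. Taking the $(x,y)$ entry of both sides and using that each exponent satisfies $m \leq k-1 \leq d-1 < d$, the first step gives $(A^m)_{xy} = 0$ for every term, whence $(A^d)_{xy} = 0$. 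This contradicts $(A^d)_{xy} \neq 0$ from the previous step.

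The main point to get right --- and the only place the uniqueness hypothesis is essential --- is the claim $(A^d)_{xy} \neq 0$: if there were several paths of length $d$ between $x$ and $y$, their contributions to $(A^d)_{xy}$ could cancel, and the argument would collapse. Uniqueness guarantees a single surviving term that cannot vanish. Once this is secured, the contradiction forces $q(A) = k \geq d+1$, and since $A$ was chosen with $q(A) = q(G)$, we conclude $q(G) \geq d+1$.
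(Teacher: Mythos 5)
The paper states this lemma only as a citation (Theorem~3.2 of \cite{MR3118943}) and does not reproduce a proof; your argument is exactly the standard minimal-polynomial proof used there --- $(A^m)_{xy}=0$ for $m<d$, $(A^d)_{xy}\neq 0$ by uniqueness of the shortest path, and $q(A)\le d$ would force $A^d$ into the span of $I,A,\dots,A^{d-1}$ --- and it is correct. One small point of precision: since matrices in $\mathcal{S}(G)$ may have nonzero diagonal entries, the expansion of $(A^m)_{xy}$ runs over all vertex sequences from $x$ to $y$ (walks in $G$ with a loop allowed at each vertex), not only over walks in $G$ with purely off-diagonal factors; this does not affect either of the two consequences you actually use, because collapsing the loop steps of any nonvanishing term still yields a walk in $G$ of length at most $m$.
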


\begin{corollary}\label{UniquePath}
Let $G$ be a graph $G$ with $q(G)=2$. If $G$ contains a path $P=xuy$ of length $2$, then either $x$ and $y$ are adjacent, or there is a path $P'=xvy$ in $G$ distinct from $P$.
\end{corollary}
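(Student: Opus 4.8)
The plan is to derive the statement directly from the preceding Lemma by a short contrapositive argument, splitting on whether $x$ and $y$ are adjacent. If $x$ and $y$ are adjacent, the first alternative in the conclusion holds and there is nothing more to prove, so I would assume from the outset that $x \not\sim y$ and aim to produce the second path $P'$.

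First I would pin down the distance $d_G(x,y)$. Since $x$ and $y$ are distinct, $d_G(x,y) \geq 1$; since they are assumed non-adjacent, $d_G(x,y) \neq 1$, so $d_G(x,y) \geq 2$. On the other hand, the given path $P = xuy$ witnesses a walk of length $2$ from $x$ to $y$, so $d_G(x,y) \leq 2$. Hence $d_G(x,y) = 2$, which is exactly the hypothesis setting $d = 2$ in the Lemma.

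Next I would apply the Lemma in contrapositive form. If there were \emph{only one} path of length $2$ between $x$ and $y$, the Lemma would force $q(G) \geq d+1 = 3$, contradicting the assumption $q(G) = 2$. Therefore $x$ and $y$ must be joined by at least two distinct paths of length $2$. The final step is to translate this into the desired conclusion: every path of length $2$ from $x$ to $y$ has the form $x\,w\,y$ for some common neighbor $w$ of $x$ and $y$, and $P = xuy$ is one such path; since there is a second one, it must be $P' = xvy$ for some vertex $v \neq u$, and $P'$ is distinct from $P$.

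There is no serious obstacle here, as the result is a near-immediate specialization of the Lemma; the only points requiring care are the clean verification that the non-adjacency assumption yields precisely $d_G(x,y) = 2$ (so that the Lemma applies with $d = 2$), and the observation that the Lemma's notion of ``a path of length $d$'' coincides, for $d = 2$, with the explicit form $xvy$ appearing in the statement, so that a second path of length $2$ is automatically a $P'$ of the required type.
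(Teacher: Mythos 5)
Your argument is correct and is precisely the immediate specialization (with $d=2$) that the paper intends, since it states this corollary without proof as a direct consequence of the preceding lemma. The one point you rightly handle with care --- that non-adjacency plus the existence of $P$ pins down $d_G(x,y)=2$ so the lemma applies --- is the whole content of the deduction.
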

\begin{corollary}\label{2Connected}
If $G$ is a connected graph on at least 3 vertices, and $q(G)=2$, then $G$ is 2-connected.
\end{corollary}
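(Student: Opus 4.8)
The plan is to prove the contrapositive of the cut-vertex condition: assuming $G$ is connected with $n\geq 3$ and $q(G)=2$, I will show $G$ has no cut vertex, which together with connectivity and $n\geq 3$ is exactly 2-connectedness. So suppose toward a contradiction that some vertex $u$ is a cut vertex, meaning $G-u$ has at least two connected components, say $C_1$ and $C_2$.

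The first step is to locate a suitable path of length $2$ through $u$. Since $G$ is connected, $u$ must have a neighbor in each component of $G-u$ (otherwise that component would be disconnected from the rest of $G$). Choose $x\in C_1$ and $y\in C_2$ with $x\sim u$ and $y\sim u$. Then $P=xuy$ is a path of length $2$ in $G$, and crucially $x$ and $y$ are \emph{not} adjacent, since they lie in distinct components of $G-u$.

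Now I would invoke Corollary \ref{UniquePath} with this path $P$. Because $q(G)=2$ and $x,y$ are nonadjacent, the corollary forces the existence of a second path $P'=xvy$ distinct from $P$; distinctness means its middle vertex satisfies $v\neq u$. But then $x\sim v\sim y$ exhibits an $x$--$y$ path in $G$ that avoids $u$ entirely, so $x$ and $y$ lie in the same component of $G-u$. This contradicts our choice of $x\in C_1$ and $y\in C_2$ in distinct components, completing the argument.

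The argument is essentially a direct application of Corollary \ref{UniquePath}, so there is no serious analytic obstacle; the only point requiring care is the setup. One must verify that $u$ genuinely has neighbors in two different components of $G-u$ (guaranteed by connectivity of $G$) and must correctly read ``a path $P'=xvy$ distinct from $P$'' as asserting a common neighbor $v\neq u$ of $x$ and $y$ — it is precisely this alternate common neighbor that bridges the two components and yields the contradiction.
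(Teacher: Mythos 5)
Your proof is correct. It rests on the same key tool as the paper's proof --- Corollary \ref{UniquePath} --- but the route is genuinely different. The paper argues that every edge of $G$ lies in a cycle and concludes $2$-connectivity from that (also citing an external result as an alternative); strictly speaking, ``every edge lies in a cycle'' only gives $2$-edge-connectivity for a connected graph, so the paper's one-line deduction glosses over the step from bridgelessness to the absence of cut vertices. Your argument attacks the cut vertex directly: a cut vertex $u$ with neighbors $x,y$ in distinct components of $G-u$ yields the nonadjacent pair $x,y$ with the path $xuy$, Corollary \ref{UniquePath} produces a second common neighbor $v\neq u$, and $xvy$ reconnects the two components --- a contradiction. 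This is a cleaner and fully self-contained derivation of the vertex-connectivity statement, at the small cost of having to set up the cut-vertex configuration (in particular, verifying that $u$ has a neighbor in each component, which you do). Both proofs are short; yours buys rigor on exactly the point the paper leaves implicit.
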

\begin{proof}
Using Corollary \ref{UniquePath}, every edge of $G$ lies in a cycle, and thus $G$ is $2$-connected. (This is also an immediate consequence of Corollary 4.3 in \cite{MR3118943}.)
\end{proof}

A set of vertices in a graph $G$ is called \emph{independent} if no two of them are adjacent. 

\begin{lemma}\label{maxindependnet}[Lemma 2.3, \cite{MR4044603}]
Let $G$ be a connected graph on $n$ vertices with $q(G)=2$. If $S$ is an independent set of vertices, then $|S|\leq \lfloor \frac{n}{2}\rfloor.$
\end{lemma}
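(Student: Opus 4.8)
The plan is to exploit the orthogonal-matrix characterization of $q(G)=2$ together with the observation that an independent set forces a diagonal principal submatrix. Since $q(G)=2$, I would choose $A\in\mathcal{S}(G)$ with exactly two distinct eigenvalues $\lambda\neq\mu$ and pass to the affine normalization $B=\frac{2}{\lambda-\mu}A-\frac{\lambda+\mu}{\lambda-\mu}I$. Scaling by a nonzero constant and adding a multiple of the identity preserves the off-diagonal zero pattern, so $B\in\mathcal{S}(G)$; moreover $B$ is symmetric with eigenvalues $\pm 1$, whence $B^2=I$. Thus $B$ is a symmetric orthogonal involution whose rows are orthonormal, and this is the object I would work with.

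Next I would record the block structure relative to $S$. Writing $s=|S|$ and ordering the vertices so that $S$ comes first, set $B=\begin{pmatrix} D & X \\ X^{T} & C\end{pmatrix}$, where $D$ is the $s\times s$ principal submatrix indexed by $S$, $X$ is $s\times(n-s)$, and $C$ is $(n-s)\times(n-s)$. Because $S$ is independent, every off-diagonal entry of $D$ corresponds to a non-edge and hence vanishes, so $D=\operatorname{diag}(b_{11},\dots,b_{ss})$ is diagonal. Reading off the top-left block of $B^2=I$ gives $D^2+XX^{T}=I_s$, that is, $XX^{T}=I_s-D^2=\operatorname{diag}(1-b_{ii}^2)$.

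The crux is to argue that this diagonal matrix is positive definite, which is where connectivity enters. If some diagonal entry were zero, then $b_{ii}^2=1$ for that $i\in S$; since the corresponding row of the orthogonal matrix $B$ is a unit vector, all its other entries would vanish, forcing vertex $i$ to be isolated and contradicting connectivity (here one uses $n\geq 2$, which holds whenever $q(G)=2$). Hence $1-b_{ii}^2>0$ for every $i\in S$, so $XX^{T}$ is an $s\times s$ positive definite matrix and $\operatorname{rank}(XX^{T})=s$.

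Finally, I would close with a rank count. Since $X$ has only $n-s$ columns, $\operatorname{rank}(XX^{T})=\operatorname{rank}(X)\le\min(s,n-s)\le n-s$, and combining this with $\operatorname{rank}(XX^{T})=s$ yields $s\le n-s$, i.e. $2s\le n$; because $s$ is an integer this is exactly $|S|=s\le\lfloor n/2\rfloor$. I expect the only delicate point to be the positive-definiteness step, namely ruling out $b_{ii}^2=1$, which is precisely where connectivity (no isolated vertex) is used and which makes the subsequent rank bound tight.
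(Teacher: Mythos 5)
Your proof is correct and complete: the normalization to a symmetric involution $B\in\mathcal{S}(G)$, the diagonal principal submatrix forced by independence, the use of connectivity to rule out $b_{ii}^2=1$, and the final rank count $s=\operatorname{rank}(XX^T)=\operatorname{rank}(X)\le n-s$ all check out. The paper itself only cites this lemma (Lemma 2.3 of \cite{MR4044603}) without proof, and your argument is the standard one via the orthogonal-matrix characterization of $q(G)=2$.
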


\begin{lemma}\label{Lem:common-neighbors}[Theorem 4.4, Erratum to \cite{MR3118943}]
Let $G$ be a connected graph on $n\geq 3$ vertices with $q(G)=2$. If $\{u_1,u_2,\ldots,u_k\}$ is an independent set of vertices
such that for each $i = 1, 2,\ldots , k$, there exists a $j \neq i$ with $N(u_i) \cap N(u_j) \neq \emptyset$, 
then 
\[
\left|\bigcup_{i\neq j}\left(N(u_i)\cap N(u_j)\right)\right| \geq k.
\] 
\end{lemma}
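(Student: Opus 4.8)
The plan is to invoke the characterization quoted in the introduction: since $G$ is connected on $n\ge 3$ vertices it is nontrivial, so $q(G)=2$ guarantees an orthogonal matrix $A\in\mathcal{S}(G)$. Because $A$ is symmetric and orthogonal, $A^2=I$, and in particular the rows of $A$ are orthonormal. I would index the rows of $A$ by the vertices of $G$, writing $r_i$ for the row indexed by $u_i$. Since $\{u_1,\dots,u_k\}$ is independent, $A_{u_i,u_j}=0$ for all $i\neq j$, and the support of $r_i$ is contained in $N[u_i]$.

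Write $W=\bigcup_{i\neq j}\left(N(u_i)\cap N(u_j)\right)$ for the set whose size must be bounded below, and for each $i$ let $w_i\in\mathbb{R}^{|W|}$ be the restriction of $r_i$ to the coordinates indexed by $W$. The key structural observation is that, outside $W$, the supports of two distinct rows $r_i$ and $r_j$ are disjoint: their supports can only meet in $N[u_i]\cap N[u_j]$, and since $u_i\notin N[u_j]$ and $u_j\notin N[u_i]$ by independence, this intersection is exactly $N(u_i)\cap N(u_j)\subseteq W$. Consequently the orthogonality relation $r_i\cdot r_j=0$ collapses to
\[
0=r_i\cdot r_j=\sum_{w\in N(u_i)\cap N(u_j)}A_{u_i,w}A_{u_j,w}=w_i\cdot w_j,
\]
so the vectors $w_1,\dots,w_k$ are pairwise orthogonal.

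Next I would verify that each $w_i$ is nonzero, which is precisely where the hypothesis enters: for each $i$ there is some $j\neq i$ and a vertex $w\in N(u_i)\cap N(u_j)$; such a $w$ lies in $W$, and $A_{u_i,w}\neq 0$ because $w$ is adjacent to $u_i$, so the $w$-coordinate of $w_i$ is nonzero. Thus $w_1,\dots,w_k$ are $k$ nonzero, pairwise orthogonal vectors, hence linearly independent, and they live in $\mathbb{R}^{|W|}$; therefore $k\le|W|$, which is the desired inequality.

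The bulk of the argument is elementary once this reduction is in place, so the only real obstacle is the support-disjointness observation of the second paragraph. Everything hinges on recognizing that the diagonal entries sit at the distinct vertices $u_1,\dots,u_k$ and that the \emph{private} neighbors (those adjacent to a single $u_i$) contribute nothing to any inner product $r_i\cdot r_j$, so that restricting each row to the shared neighborhood $W$ preserves orthogonality exactly. Once that is seen, the nonvanishing of each $w_i$ and the standard fact that nonzero pairwise-orthogonal vectors are linearly independent finish the proof immediately.
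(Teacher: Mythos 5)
Your argument is correct and complete: passing from $q(G)=2$ to a symmetric orthogonal $A\in\mathcal{S}(G)$, noting that independence of $\{u_1,\dots,u_k\}$ forces the supports of the corresponding rows to meet only inside $W=\bigcup_{i\neq j}(N(u_i)\cap N(u_j))$, and then counting $k$ nonzero pairwise orthogonal vectors in $\mathbb{R}^{|W|}$ is exactly the right reduction, and the hypothesis is used precisely where it must be (to make each restricted row nonzero). The paper itself gives no proof of this lemma --- it is imported as Theorem 4.4 of the erratum to the cited reference --- and your proof is the standard argument for that result, so there is nothing in the paper to contrast it with.
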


Let $G$ be a graph and $v\in V(G)$. We use $\textrm{jdup}(G,v)$ to denote the graph obtained from $G$ by adding a new vertex $u$ and adding all edges between $u$ and $N[v]$, and we say $u$ is a \emph{joined duplicated vertex} of $v$. If the vertex $u$ is adjacent to all of the neighbors, $N(v)$, of $v$ and not adjacent to $v$, then the new graph is denoted by $\textrm{dup}(G,v)$ and we say $u$ is a \emph{duplicated vertex} of $v$. For a list $L$ of vertices of $G$, the graphs $\jdup(G,L)$, or $\dup(G,L)$, are the graphs that result from joined duplicating, or duplicating, each vertex in $L$ in sequence. 

\begin{lemma}(Corollary 3.3, \cite{Butler}) 
If $G$ has no isolated vertices, and a matrix $A\in \mathcal{S}(G)$ achieves the multiplicity list of $(m_1,\ldots,m_k)$, then for any $v\in V(G)$ there is a matrix $B \in \mathcal{S}(\textrm{jdup}(G,v))$ that achieves the multiplicity list $(m_1+1,\ldots,m_k)$.
\end{lemma}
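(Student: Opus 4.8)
The plan is to realize the desired multiplicity increase through a positive semidefinite factorization, \emph{splitting} the column associated to $v$ between $v$ and its new copy $u$ so that the rank is preserved while the nullity grows by one. Write the ordered eigenvalues of $A$ as $\lambda_1<\cdots<\lambda_k$ with multiplicities $m_1,\dots,m_k$; since the list we must produce is $(m_1+1,m_2,\dots,m_k)$, it is the multiplicity of the \emph{smallest} eigenvalue that increases. Set $M=A-\lambda_1 I$. Because $\lambda_1$ is the least eigenvalue, $M\succeq 0$, it has rank $r=n-m_1$, and its positive eigenvalues are exactly $\lambda_i-\lambda_1$ with multiplicity $m_i$ for $i\ge 2$. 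Factor $M=R^{T}R$ with $R\in\mathbb{R}^{r\times n}$, and let $r_w\in\mathbb{R}^r$ denote the column of $R$ indexed by $w\in V(G)$, so that $M_{w w'}=\langle r_w,r_{w'}\rangle$.

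Next I would construct $\tilde R\in\mathbb{R}^{r\times(n+1)}$ on the vertex set $V(G)\cup\{u\}$ by keeping $\tilde r_w=r_w$ for $w\neq v$ and splitting the $v$-column into $\tilde r_v=\cos\theta\,r_v$ and $\tilde r_u=\sin\theta\,r_v$ for a fixed $\theta\in(0,\pi/2)$. Put $N=\tilde R^{T}\tilde R$ and $B=N+\lambda_1 I$. The splitting is chosen so that $\tilde R\tilde R^{T}=\sum_{w\neq v}r_w r_w^{T}+(\cos^2\theta+\sin^2\theta)\,r_v r_v^{T}=RR^{T}$; hence $N$ and $M$ have identical nonzero (positive) eigenvalues with identical multiplicities. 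Since $\{\tilde r_w\}$ spans the same subspace as $\{r_w\}$, $\operatorname{rank}\tilde R=r$, so $N$ has nullity $(n+1)-r=m_1+1$. Therefore $B$ has eigenvalue $\lambda_1$ with multiplicity $m_1+1$ and eigenvalue $\lambda_i$ with multiplicity $m_i$ for $i\ge 2$, i.e.\ $B$ achieves $(m_1+1,m_2,\dots,m_k)$. Reading off entries, $B$ agrees with $A$ on $V(G)\setminus\{v\}$, scales the $v$-row by $\cos\theta$, gives $u$ the row $\sin\theta$ times that of $v$, and sets $B_{vu}=\cos\theta\sin\theta\,M_{vv}$ (diagonal entries being unconstrained).

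The step I expect to carry the real content is verifying that $B$ lies in $\mathcal{S}(\jdup(G,v))$, that is, that its zero pattern is exactly right. For $w\neq v$ we have $B_{wv}=\cos\theta\,A_{wv}$ and $B_{wu}=\sin\theta\,A_{wv}$, and since $\cos\theta,\sin\theta\neq 0$ these vanish precisely for $w\notin N(v)$; thus $v$ keeps its neighborhood and $u$ is adjacent to exactly $N(v)$. The delicate point is the new edge $uv$: it is present iff $B_{vu}=\cos\theta\sin\theta\,(A_{vv}-\lambda_1)\neq 0$, which forces $M_{vv}>0$. This is exactly where the hypothesis that $G$ has no isolated vertices enters: a zero diagonal entry of the positive semidefinite matrix $M$ would force the entire $v$-row of $M$ to vanish, but $v$ has a neighbor $w$, so $M_{wv}=A_{wv}\neq 0$, a contradiction. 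Hence $M_{vv}>0$, the edge $uv$ appears, and $N[u]=N[v]$ as required for a joined duplicate, completing the argument.
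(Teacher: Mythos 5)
The paper itself offers no proof of this lemma---it is quoted from Corollary 3.3 of the cited reference---so your argument can only be compared with the standard proof of that result, which it matches in essentially every detail: shift by the least eigenvalue to obtain a positive semidefinite Gram matrix, split the vector representing $v$ into two nonzero scalar multiples of itself, and note that the rank (hence the nonzero spectrum) is preserved while the order grows by one, so only the multiplicity of $\lambda_1$ increases. Your proof is correct, and you place the no-isolated-vertex hypothesis exactly where it is needed, namely to force $M_{vv}>0$ so that $B_{uv}\neq 0$ and the new vertex is a \emph{joined} duplicate of $v$ rather than a mere duplicate.
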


\begin{corollary}\label{q of jdup}
If $G$ is a connected graph, then $q(\textrm{jdup}(G,v))\leq q(G)$.	
\end{corollary}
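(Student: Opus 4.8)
The plan is to apply the preceding lemma (Corollary 3.3 of \cite{Butler}) essentially verbatim. Writing $k = q(G)$, I would first select a matrix $A \in \mathcal{S}(G)$ that witnesses this minimum, so that $A$ has exactly $k$ distinct eigenvalues. Denoting their multiplicities by $m_1, m_2, \ldots, m_k$, the matrix $A$ achieves the ordered multiplicity list $(m_1, m_2, \ldots, m_k)$, a list with exactly $k$ positive entries.

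Next I would check that the hypothesis of the lemma is met, namely that $G$ has no isolated vertices. Since $G$ is connected on at least two vertices, every vertex has a neighbor, so $G$ has no isolated vertices and the lemma applies to the pair $(A, v)$. Invoking it produces a matrix $B \in \mathcal{S}(\jdup(G,v))$ that achieves the ordered multiplicity list $(m_1+1, m_2, \ldots, m_k)$. The decisive observation is that this new list still has exactly $k$ entries, all of them positive, so $B$ has exactly $k$ distinct eigenvalues; that is, $q(B) = k$. Since $q(\jdup(G,v))$ is the minimum of $q(\cdot)$ over all matrices in $\mathcal{S}(\jdup(G,v))$, we conclude $q(\jdup(G,v)) \le q(B) = k = q(G)$.

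There is no genuine obstacle here beyond confirming applicability of the cited lemma; the one point meriting a word of care is the bookkeeping that joined duplication increases the first multiplicity by one (taking the partition of $n$ to a partition of $n+1$) while leaving the \emph{length} of the list unchanged, which is precisely what preserves the count of distinct eigenvalues. I would also flag the degenerate boundary case: for the single-vertex graph, $\jdup$ yields $K_2$ with $q(K_2)=2$, so the hypothesis of connectivity is to be read as including at least two vertices, matching the ``no isolated vertices'' requirement of the lemma.
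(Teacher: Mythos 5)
Your proof is correct and is precisely the argument the paper intends: the corollary is stated without proof as an immediate consequence of the preceding lemma from \cite{Butler}, and your application of it (the new list $(m_1+1,\ldots,m_k)$ has the same length $k$, so the witness matrix $B$ still has $k$ distinct eigenvalues) is exactly the right bookkeeping. Your remark about the single-vertex case is a fair observation, since $q(\jdup(K_1,v))=q(K_2)=2>1=q(K_1)$, so the ``no isolated vertices'' hypothesis of the lemma is genuinely needed.
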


\section{Candle Constructions}\label{Constructions}

In this section, we introduce two families of connected graphs with $q$-value equal to $2$. In Section \ref{AllowsSection} we will show that these graphs are edge-minimal with respect to having $q$-value equal to $2$.

For $k\geq 3$, the \emph{double-ended candle} of diameter $k$ is the graph $G_k$ formed from $P_{k+1}$ (with vertices labeled $1, 2, 4,\ldots, 2k-2,2k$) by duplicating the vertices $2, 4,\ldots, 2k-2$ in sequence; that is, $G_k=\dup(P_{k+1},[2, 4,\ldots, 2k-2])$. We take $G_2=C_4$. The vertices of $G_k$ are labeled according to Figure \ref{DoubleCandle}. 
\begin{figure}[h!]
\centering
\begin{tikzpicture}[scale=0.65, vrtx/.style args = {#1/#2}{%
      circle, draw, fill=black, inner sep=0pt,
      minimum size=6pt, label=#1:#2}]
\node (0) [vrtx=left/1] at (0,0) {};
\node (1) [vrtx=above/2] at (2,1) {};
\node (2) [vrtx=above/4] at (4,1) {};
\node (3) [vrtx=above/6] at (6,1) {};
\node (k-3) [vrtx=above/$2k-6$] at (10,1) {};
\node (k-2) [vrtx=above/$2k-4$] at (12,1)  {};
\node (k-1) [vrtx=above/$2k-2$] at (14,1)  {};
\node (k) [vrtx=right/$2k$] at (16,0)  {};
\node (k+1) [vrtx=below/$2k-1$] at (14,-1) {};
\node (k+2) [vrtx=below/$2k-3$] at (12,-1)  {};
\node (k+3) [vrtx=below/$2k-5$] at (10,-1)  {};
\node (2k-1) [vrtx=below/$7$] at (6,-1) {};
\node (2k) [vrtx=below/$5$] at (4,-1)  {};
\node (2k+1) [vrtx=below/$3$] at (2,-1)  {};
\node (dotsu) at (8,1) {$\ldots$};
\node (dotsc) at (8,0) {$\ldots$};
\node (dotsl) at (8,-1) {$\ldots$};
\draw (0) edge (1);
\draw (1) edge (2);
\draw (2) edge (3);
\draw (k-3) edge (k-2);
\draw (k-2) edge (k-1);
\draw (k-1) edge (k);
\draw (k) edge (k+1);
\draw (k+1) edge (k+2);
\draw (k+2) edge (k+3);
\draw (2k-1) edge (2k);
\draw (2k) edge (2k+1);
\draw (2k+1) edge (0);
\draw (1) edge (2k);
\draw (2) edge (2k+1);
\draw (2) edge (2k-1);
\draw (3) edge (2k);
\draw (k-3) edge (k+2);
\draw (k-2) edge (k+3);
\draw (k-2) edge (k+1);
\draw (k-1) edge (k+2);
\end{tikzpicture}
\caption{Double-ended candle $G_k$.}\label{DoubleCandle}
\end{figure}

For $k\geq 2$, the \emph{single-ended candle} of diameter $k$ is the graph $G'_k$ formed from $G_k$ by joined duplicating the vertex $2k$ (see Figure \ref{DoubleCandle});  $G'_k=\jdup(G_k,2k)$. We take $G_1'=K_3$. The vertices of $G'_k$ are labeled according to Figure \ref{Single-Ended Candle}. 

\begin{figure}[h!]
\centering
\begin{tikzpicture}[scale=0.65, vrtx/.style args = {#1/#2}{%
      circle, draw, fill=black, inner sep=0pt,
      minimum size=6pt, label=#1:#2}]
\node (0) [vrtx=left/1] at (0,0) {};
\node (1) [vrtx=above/2] at (2,1) {};
\node (2) [vrtx=above/4] at (4,1) {};
\node (3) [vrtx=above/6] at (6,1) {};
\node (k-2) [vrtx=above/$2k-4$] at (10,1) {};
\node (k-1) [vrtx=above/$2k-2$] at (12,1)  {};
\node (k) [vrtx=above/$2k$] at (14,1)  {};
\node (k+1) [vrtx=below/$2k+1$] at (14,-1) {};
\node (k+2) [vrtx=below/$2k-1$] at (12,-1)  {};
\node (k+3) [vrtx=below/$2k-3$] at (10,-1)  {};
\node (2k-2) [vrtx=below/$7$] at (6,-1) {};
\node (2k-1) [vrtx=below/$5$] at (4,-1)  {};
\node (2k) [vrtx=below/$3$] at (2,-1)  {};
\node (dotsu) at (8,1) {$\ldots$};
\node (dotsc) at (8,0) {$\ldots$};
\node (dotsl) at (8,-1) {$\ldots$};
\draw (0) edge (1);
\draw (1) edge (2);
\draw (2) edge (3);
\draw (k-2) edge (k-1);
\draw (k-1) edge (k);
\draw (k) edge (k+1);
\draw (k+1) edge (k+2);
\draw (k+2) edge (k+3);
\draw (2k-2) edge (2k-1);
\draw (2k-1) edge (2k);
\draw (2k) edge (0);
\draw (1) edge (2k-1);
\draw (2) edge (2k-2);
\draw (2) edge (2k);
\draw (3) edge (2k-1);
\draw (k-2) edge (k+2);
\draw (k-1) edge (k+1);
\draw (k-1) edge (k+3);
\draw (k) edge (k+2);
\end{tikzpicture}
\caption{Single-ended candle $G'_k$.}\label{Single-Ended Candle}
\end{figure}

To find the number of edges in $G_k$, we simply halve the total sum of the vertex degrees. For example, when $k\geq 3$, each $G_k$ has two vertices of degree $2$, four vertices of degree $3$, and $2(k-3)$ vertices of degree $4$. 
\begin{remark}\label{CandleDensity}
The number of edges of $G_k$, and $G_k'$ can be expressed in terms of their number of vertices as follows.
\begin{itemize}
\item For $k\geq 2$, $|V(G_k)|=2k=n$ and $|E(G_k)|=4k-4=2n-4$.  
\item For $k\geq 1$, $|V(G'_k)|=2k+1=n$ and $|E(G'_k)|=4k-1=2n-3$.
\end{itemize}
\end{remark}
The following lemma gives a construction of orthogonal matrices for all candle graphs.
\begin{lemma}\label{qCandle}
For $k\geq 2$, we have $q(G_k)=q(G'_k)=2$. And $q(G_1')=2$.
\end{lemma}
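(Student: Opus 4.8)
The plan is to use the standard characterization (recalled in the introduction) that a nontrivial graph $H$ satisfies $q(H)=2$ if and only if $\mathcal S(H)$ contains an orthogonal matrix, equivalently a symmetric matrix with eigenvalues $\pm1$, and to exhibit such matrices. First I would dispatch the single-ended family: for $k\ge 2$, \emph{once} $q(G_k)=2$ is known, the identity $G'_k=\jdup(G_k,2k)$ together with Corollary \ref{q of jdup} gives $q(G'_k)\le q(G_k)=2$, and since $G'_k$ has an edge we conclude $q(G'_k)=2$. The remaining case $G'_1=K_3$ is handled by the explicit reflection $\tfrac{2}{3}J-I$ (with $J$ the all-ones matrix), which is symmetric orthogonal with all off-diagonal entries nonzero. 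Everything thus reduces to proving $q(G_k)=2$ for $k\ge 2$, with base case $G_2=C_4$.

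The structural observation I would establish first is that $G_k$ is a balanced bipartite graph with a clean ``path of blocks'' description. Re-reading the duplication process, the vertices organize into levels $L_0,L_1,\dots,L_k$ with $|L_0|=|L_k|=1$ and $|L_i|=2$ for $1\le i\le k-1$, each $L_i$ an independent set, where consecutive levels are joined completely (a $K_{2,2}$, or $K_{1,2}$ at the two ends) and non-consecutive levels have no edges. Colouring levels by the parity of their index exhibits $G_k$ as bipartite with both parts of size $k$. Since the diagonal is unconstrained, any matrix of the block form $A=\bigl(\begin{smallmatrix}0&B\\ B^{\top}&0\end{smallmatrix}\bigr)$ lies in $\mathcal S(G_k)$ precisely when the $k\times k$ block $B$ has zero pattern equal to the bipartite adjacency of $G_k$, and such an $A$ is orthogonal exactly when $B$ is orthogonal (as $A^2=\operatorname{diag}(BB^{\top},B^{\top}B)$). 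Hence it suffices to produce a $k\times k$ orthogonal matrix $B$ whose support is the bipartite adjacency pattern; ordering each part by level makes $B$ \emph{block-bidiagonal}, with a full (all-nonzero) block between coordinates of consecutive levels and zero blocks otherwise.

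I would construct $B$ by induction on $k$, building it level by level. At each step the newly added level contributes a new row/column block, and the completeness of the join to the previous level means the corresponding off-diagonal block of $B$ is entirely free; this freedom is what allows one to orthonormalize the new rows against the previously constructed ones while preserving the band. The base case $k=2$ is the rotation $\tfrac{1}{\sqrt2}\bigl(\begin{smallmatrix}1&-1\\ 1&1\end{smallmatrix}\bigr)$, which recovers the familiar orthogonal matrix in $\mathcal S(C_4)$, and a direct computation already produces a valid $B$ for $k=3$.

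I expect the main obstacle to be exactly the bookkeeping of the support: one must verify simultaneously that every prescribed entry of $B$ stays nonzero (so no edge of $G_k$ is accidentally lost), that no forbidden entry is created (so that $A\in\mathcal S(G_k)$, not merely in $\mathcal S(H)$ for some denser $H$), and that orthogonality is maintained across the inductive step, all achievable by a generic choice of rotation angles. It is worth noting that this delicacy is genuine rather than cosmetic: a naive tensor-product ansatz $M\otimes J_2+D\otimes I_2$ for a \emph{fully} blown-up path would require $2M+D$ to be a symmetric orthogonal matrix in $\mathcal S(P_{k+1})$, i.e.\ $q(P_{k+1})=2$, which fails. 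The un-doubled single-vertex endpoints $L_0,L_k$ are precisely what make the banded orthogonal $B$ available, so controlling their short rows is the crux of the argument.
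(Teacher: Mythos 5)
Your reductions are sound and track the paper's: $q(G'_k)\le q(G_k)$ via Corollary \ref{q of jdup}, the explicit reflection for $K_3$, the level decomposition $L_0,\dots,L_k$ of $G_k$ with consecutive levels completely joined and each level independent, and the observation that it suffices to produce a $k\times k$ orthogonal matrix $B$ with the banded bipartite support. The paper does essentially this, but writes the matrix down explicitly (block tridiagonal over the levels with off-diagonal blocks $T_1$, $J$, $R$, $T_2$) and checks $X^2=4I$ by inspecting the $2\times2$ block products. The genuine gap in your version is the inductive step, and specifically the claim that the extension is ``achievable by a generic choice of rotation angles.'' Write the row block of $B$ for level $L_{2i}$ as $[P_i\;Q_i]$, where $P_i$ is its restriction to the columns of $L_{2i-1}$ and $Q_i$ its restriction to the columns of $L_{2i+1}$. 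Consecutive row blocks overlap only in the columns of $L_{2i-1}$, so orthogonality between blocks forces the row space of $P_i$ to be orthogonal to the row space of $Q_{i-1}$ inside $\mathbb{R}^2$. Hence if $Q_{i-1}$ ever has rank $2$, then $P_i=0$ and the support collapses, i.e.\ edges of $G_k$ are lost. Since $Q_{i-1}Q_{i-1}^T=I-P_{i-1}P_{i-1}^T$ (for unit rows) and $P_{i-1}$ is itself forced to be rank one, $Q_{i-1}$ has rank one only when $\|P_{i-1}\|_F^2=1$ exactly --- a codimension-one condition. A generic orthonormalization at step $i-1$ therefore produces a rank-$2$ trailing block and stalls the induction at step $i$. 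This rank-one propagation is the actual content of the construction; it must be enforced at \emph{every} internal level, not merely at the two short end rows, and your proposal neither states this invariant nor proves it can be maintained.

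The gap is closable, and closing it lands on the paper's matrix: the $1\times2$ row of $L_0$ is automatically rank one, and taking each $P_i$ proportional to the all-ones block $J$ and each $Q_i$ proportional to the rank-one block $R$ (with the norm split evenly between the two halves) preserves the invariant, with the parity of $k$ affecting only whether the final block is $T_1^T$ or $T_2^T$. But as written, the central step of your argument is asserted rather than proved, and the heuristic offered to justify it is false.
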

\begin{proof}
The adjacency matrix of (the non-empty graph) $G_1'=K_3$ has two distinct eigenvalues, so $q(G_1')=2$.

The graphs $G_k$ and $G'_k$ with $k\geq 2$ are connected with at least four vertices, so they have at least two distinct eigenvalues. To see that they achieve two distinct eigenvalues, we construct orthogonal matrices for each family of graphs.  

Let 
\begin{eqnarray*}
R=\left[ \begin{array}{rr} 
-1 & 1 \\
1 & -1 \end{array} 
\right],\ \ 
J= \left[ \begin{array}{rr}
1 & 1 \\
1 & 1 \end{array} \right],\ \  
O= \left[ \begin{array}{rr}
0 & 0 \\
0 & 0 \end{array} \right],\ \ 
T_1=\sqrt{2} [1, -1],\ \  \mbox{and}\ \  T_2=\sqrt{2} [1, 1].
\end{eqnarray*}

Let $X=[X_{i,j}]$ be a block-partitioned matrix where $X_{i,j}$ denotes the block matrix in block row $i$ and block column $j$. We construct the matrix $A\in \mathcal{S}(G_k)$ using block matrices. We let the diagonal blocks be zero, and the blocks above the diagonal blocks be defined as described in what follows. The matrix is symmetric so the blocks below the diagonal blocks are simply the transposes of their corresponding matrices above the diagonal blocks. 

\noindent\emph{Case 1:} $n=2k$ and $k$ is odd:

\begin{equation*}
X_{i,j}=
    \begin{cases}
      J ,& \for (i,j)= (2,3),(4,5), \ldots,(k-3,k-2),(k-1,k) \ \\
      R,& \for (i,j)=(3,4),(5,6), \ldots,(k-2,k-1) \\
      T_1,& \for (i,j)=(1,2)\\
      T_1^T,& \for (i,j)=(k,k+1)\\
      O,& \textrm{otherwise}\\
    \end{cases}
\end{equation*}

\noindent\emph{Case 2:} $n=2k$ and $k$ is even:

\begin{equation*}
X_{i,j}=
    \begin{cases}
      J ,& \for (i,j)= (2,3),(4,5), \ldots,(k-4,k-3),(k-2,k-1) \ \\
      R,& \for (i,j)=(3,4),(5,6), \ldots,(k-1,k) \\
      T_1,& \for (i,j)=(1,2)\\
      T_2^T,& \for (i,j)=(k,k+1)\\
      O,& \textrm{otherwise.}\\
    \end{cases}
\end{equation*}

In each case, every row of $X_{i,j}$ has Euclidean length $2$.
Since  each of the $2 \times 3$, and $2 \times 4$ matrices 
$[T_1^T\; J]$, $[J\; R]$,   $[J \; T_1^T]$, and $[R \; T_2^T]$
have orthogonal rows, each pair of rows of $X$ coming from the same $2\times 2$ block 
are orthogonal.  In the first case, for $i\neq j$, the $(i,j)$-block of $X^2$ is one 
of $T_1J$, $JR$, or their transposes, or the zero block.  In the second case, for $i\neq j$, the $(i,j)$-block of $X^2$ is one 
of  $T_1J$, $JR$,  $T_2R$ or their transposes, or the zero block.
In each of the cases the matrix is the zero matrix.  
Hence $X^T X=X^2= 4I$ in either case.  We conclude that $\frac{1}{2} X$ is an orthogonal matrix whose graph 
is the double-ended candle on $n=2k$ vertices, so $q(G_k)=2$. Using Corollary \ref{q of jdup}, it follows that $q(G'_k)=2.$
\end{proof}

\begin{remark}
Each $G_k$ and $G'_k$ has independence number $k$. Thus, by Lemma 2.3 of \cite{MR4044603}, the only achievable multiplicity partition for two distinct eigenvalues is $[k,k]$ for $G_k$, and $[k,k+1]$ for $G'_k$.
\end{remark}

\begin{remark}
Graphs $G_k$ and $G_k'$ are obtained from $P_{k+1}$ by a sequence of vertex duplications (and joined duplications). The only graphs $G$ on $n$ vertices with $q(G)=n$ are the path graphs (see \cite{MR3118943}). So $q(P_{k+1})=k+1$, but after a sequence of vertex duplications we have $q(G_k)=q(G_k')=2$, a dramatic reduction in $q$-value.
\end{remark}

\section{The $q(G)=2$ Allows Problem}\label{AllowsSection}

The $q(G)=2$ allows problem asks: for a connected graph $G$ on $n$ vertices, how many edges are necessary to allow $q(G)=2$? The answer, along with the characterization of the extremal graphs, is given by the following theorem. We denote the graph on eight vertices given by the vertices and edges of the cube by $Q_3$.
\begin{theorem}\label{Characterization}
If $G$ is a connected graph on $n\geq 3$ vertices with $q(G)=2$, then
\[
|E(G)|\geq\begin{cases}
2n-4 ,& \text{if $n$ is even, and}\\
2n-3, & \text{if $n$ is odd.}
\end{cases}
\]
Moreover, the only graphs that meet this bound with $n$ even are $Q_3$ and the double-ended candles. The only graphs that meet this bound with $n$ odd are the single-ended candles.
\end{theorem}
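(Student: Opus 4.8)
The plan is to combine the ``unique shortest path'' obstruction of Corollary~\ref{UniquePath} with a distance layering of $G$, and then to read off both the edge bound and the extremal structure from the cases in which the layering inequalities are tight.

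First I would reduce to the interesting regime. By Corollary~\ref{2Connected}, $G$ is $2$-connected, so $\delta(G)\ge 2$; and if $\delta(G)\ge 4$ then $2|E(G)|=\sum_v\deg(v)\ge 4n$ gives $|E(G)|\ge 2n$, well above both target bounds. So I may assume $\delta(G)\in\{2,3\}$. Fix a vertex $x$ of minimum degree and form the distance layers $N_i=N_i(x)$, with $n_i=|N_i|$ for $0\le i\le d$, where $d$ is the eccentricity of $x$. Since $G$ is $2$-connected no single vertex separates it, and because edges run only within a layer or between consecutive layers, a layer of size $1$ strictly between $N_0$ and $N_d$ would be a cut vertex; hence $n_i\ge 2$ for $1\le i\le d-1$.

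Second, the crux is the following local claim, which I would prove directly from Corollary~\ref{UniquePath}: for every $j\ge 2$, each $w\in N_j$ has at least two neighbors in $N_{j-1}$. Indeed, pick a neighbor $u\in N_{j-1}$ of $w$ and a neighbor $p\in N_{j-2}$ of $u$; then $p\,u\,w$ is a path of length $2$ whose endpoints are non-adjacent (they lie in layers $j-2$ and $j$), so Corollary~\ref{UniquePath} produces a second common neighbor $v\ne u$ of $p$ and $w$, and the layer constraints $v\in N(p)$ and $v\in N(w)$ force $v\in N_{j-1}$. Writing $e(S,T)$ for the number of edges between vertex sets $S$ and $T$, each $w\in N_1$ contributes exactly one edge down to $x$ while each $w\in N_j$ with $j\ge 2$ contributes at least two, so
\[
|E(G)| \ \ge\ \sum_{i=0}^{d-1} e(N_i,N_{i+1}) \ \ge\ n_1 + 2\sum_{j=2}^{d} n_j \ =\ 2n-2-n_1 \ =\ 2n-2-\delta(G).
\]
When $\delta(G)=2$ this already yields $|E(G)|\ge 2n-4$.

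Third --- and this is where the real work lies --- I would upgrade this to the exact bound and extract the extremal graphs. For $\delta(G)=3$ the crude count gives only $2n-5$, so I must locate at least one extra edge; here I expect to invoke Lemma~\ref{Lem:common-neighbors} applied to an independent layer (or to $N(x)$), forcing the pairwise common neighbors to accumulate in the next layer and thereby supplying either an intra-layer edge or a vertex with a third down-neighbor. The equality analysis then traces back through every inequality: $|E(G)|=2n-4$ forces $n_1=\delta=2$, every layer independent, and every vertex in layers $\ge 2$ to have exactly two down-neighbors, which together with $n_i\ge 2$ and Lemma~\ref{Lem:common-neighbors} pins the layer profile to $1,2,2,\dots,2,1$ and identifies $G$ as a double-ended candle --- except for the configurations where $\delta=3$ is forced, which I expect to isolate as exactly the cube $Q_3$. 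Since every extremal graph for $2n-4$ obtained this way has \emph{even} order, an odd graph cannot attain $2n-4$ and so needs at least $2n-3$ edges; repeating the tightness analysis one edge higher (the surplus edge appearing as a single intra-layer edge at one end, precisely the effect of a joined duplication) identifies the odd extremal graphs as the single-ended candles, with orders and edge counts confirmed via Remark~\ref{CandleDensity} and $q$-value confirmed via Lemma~\ref{qCandle}.

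The main obstacle will be the characterization rather than the bound: showing that tightness in the layer inequalities rigidly reconstructs a candle, correctly separating out $Q_3$ as the unique non-candle extremal example. In particular the $\delta(G)=3$ sub-case, where I must manufacture the missing edge(s) from the common-neighbor lemma and then argue that the only minimal way to do so is $Q_3$, is likely the most delicate point.
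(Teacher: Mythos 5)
Your skeleton coincides with the paper's: distance layering from a minimum-degree vertex, the two-predecessor claim for vertices at distance at least $2$ derived from the unique-shortest-path obstruction, the count $|E(G)|\ge 2n-2-\delta(G)$, and the reduction to $\delta(G)\in\{2,3\}$. That part of your argument is correct and is essentially Lemma~\ref{Lower1}. However, two genuine gaps remain. First, your mechanism for manufacturing the extra edge when $\delta(G)=3$ is off: Lemma~\ref{Lem:common-neighbors} applied to $N(x)$ only forces common successors to accumulate and does not by itself produce an intra-layer edge or a third down-neighbor. The argument that actually works is a degree count in the \emph{last} layer $N_{\epsilon(x)}(x)$: a vertex there has no successors, so degree $\ge 3$ forces either three predecessors or a neighbor inside its own layer, and either way one edge is uncounted. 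Second, and more seriously, the characterization is not merely ``tracing back through the tight inequalities.'' Tightness tells you every vertex in layers $\ge 2$ has exactly two predecessors, but reconstructing a candle requires (i) showing the two predecessors of the two vertices in each layer \emph{coincide} (again via unique shortest paths), (ii) a rigidity statement that the induced candle so obtained is all of $G$ --- this is Lemma~\ref{candles}, whose proof needs Lemma~\ref{Lem:common-neighbors} to kill any vertex outside the candle --- and (iii) for the $\delta=3$ even case, control of the layer profile via the nondecreasing-distance-sequence and no-three-consecutive-$3$'s arguments (Lemmas~\ref{nondecreasingNjs} and~\ref{noThree3s}) to isolate $(1,3,3,1)$, i.e.\ $Q_3$, and to exclude $K_{2,3}$.

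The largest omission is in the odd case. You assume the surplus edge ``appears as a single intra-layer edge at one end,'' but for $|E(G)|=2n-3$ the surplus can equally manifest as a vertex with an extra predecessor (keeping every layer independent), as an intra-layer edge in a \emph{middle} layer, or, when $\delta(G)=3$, in five distinct configurations. Ruling out the all-layers-independent configurations requires a parity argument you never invoke: the union of the odd layers and the union of the even layers are two independent sets partitioning $V(G)$, so on an odd number of vertices one of them exceeds $n/2$, contradicting Lemma~\ref{maxindependnet} (this is Lemma~\ref{ind-distance-part}). The middle-layer-edge configuration requires growing single-ended candles from both ends toward the intra-layer edge and deriving a contradiction with the odd order. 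Without these ingredients the claimed characterization of the odd extremal graphs as single-ended candles does not follow from your tightness analysis.
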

Lemma \ref{Lower1} and Corollary \ref{Lower2} prove the inequalities in Theorem \ref{Characterization}. From Lemma \ref{qCandle}, we have $q(G_k)=q(G'_k)=2$. Corollary 6.9 from \cite{MR3118943} establishes $q(Q_3)=2$. Theorems \ref{EvenCharacterization} and \ref{OddCharacterization} show that these are the only extremal graphs. 

For a connected graph $G$ and $v\in V(G)$, the \emph{eccentricity of $v$}, denoted by $\epsilon(v)$, is the maximum distance of any vertex of $G$ from $v$. For $i=0,1,2,\ldots, \epsilon(v)$, let $d_i=|N_i(v)|,$ so $d_0=1$. Then $(d_0,d_1,\ldots,d_{\epsilon(v)})$ is called the \emph{distance sequence} of $v$ and $(d_0,d_1,\ldots,d_{\epsilon(v)-1})$ is called the \emph{truncated distance sequence} of $v$. A \emph{distance partition} of $G$ with respect to $v$ is the partition of $V(G)$ as follows: 
\begin{equation*}
    V(G)=\cup_{i=0}^{\epsilon(v)}N_{i}(v).
\end{equation*}
In the distance partition of $G$ with respect to $v$, if $u\in N_{i-1}(v)$ and $w\in N_{i}(v)$ are adjacent, then $u$ is called a \emph{predecessor} of $w$ and $w$ is called a \emph{successor} of $u$.  

\begin{lemma}\label{Lower1}
If $G$ is a connected graph on $n\geq 3$ vertices with $q(G)=2$, then $|E(G)|\geq 2n-4$.
\end{lemma}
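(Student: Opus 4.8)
The plan is to establish the lower bound $|E(G)| \ge 2n-4$ via a distance-partition counting argument rooted at a carefully chosen vertex. I would pick a vertex $v$ of maximum eccentricity $\epsilon(v) = d$ (the diameter of $G$) and study the distance partition $V(G) = \bigcup_{i=0}^{d} N_i(v)$. The idea is to count edges by bounding from below how many edges are incident to each distance layer or cross between consecutive layers. The key structural tool is that $q(G)=2$ forces strong connectivity conditions: by Corollary \ref{2Connected} the graph is $2$-connected, so in particular every vertex in $N_i(v)$ for $i \ge 1$ has at least one predecessor in $N_{i-1}(v)$, and Corollary \ref{UniquePath} forbids unique shortest paths of length $2$, which will be the main engine for producing extra edges.

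The core of the argument is to show that each distance class $N_i(v)$ must be "thick enough" relative to its neighbors, yielding roughly two edges per vertex on average. First I would handle the cross-edges between $N_{i-1}(v)$ and $N_i(v)$: every vertex $w \in N_i(v)$ has a predecessor, giving at least $d_i$ such edges, but the no-unique-$P_3$ condition (Corollary \ref{UniquePath}) should force additional predecessors or in-layer edges. Concretely, if $w \in N_i(v)$ has a single predecessor $u \in N_{i-1}(v)$, then the path from any vertex of $N_{i-2}(v)$ through $u$ to $w$ threatens to be a unique shortest path of length $2$ unless $w$ gains a second neighbor at distance $i-1$ or $i$; I would leverage Lemma \ref{Lem:common-neighbors} on independent sets within a layer to guarantee enough common neighbors in the adjacent layer. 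Summing a bound of the form (edges incident to $N_i(v)$) $\ge 2 d_i$ over all layers, while carefully accounting for the two endpoints $N_0(v) = \{v\}$ and the outermost layer where the deficiency of $2$ on each end accumulates, should produce the $-4$ correction term in $2n-4$.

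The main obstacle I anticipate is the bookkeeping at the two "ends" of the distance partition and avoiding double-counting. The vertex $v$ itself and the vertices in the final layer $N_d(v)$ are the places where the naive "two edges per vertex" count runs short, and tracking exactly how much slack each end contributes is delicate: the bound $2n-4$ rather than $2n$ means precisely $4$ units of deficiency are permitted, two at each end, and pinning these down requires the $2$-connectedness and the diameter-realizing property of $v$ working in tandem. A secondary subtlety is that Lemma \ref{Lem:common-neighbors} applies to independent sets with a pairwise-common-neighbor hypothesis, so I would need to verify that the relevant vertices within a layer are genuinely independent (or decompose the layer into independent pieces) before invoking it. Once the per-layer inequalities are set up correctly, the final summation is a routine telescoping computation; the creative difficulty lies entirely in extracting the right local inequality from the $q(G)=2$ hypotheses and handling the boundary layers.
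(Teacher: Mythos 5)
Your overall strategy --- root a distance partition at a vertex $v$ and use Corollary \ref{UniquePath} to force extra cross-edges between consecutive layers --- is the paper's strategy, but two of your key steps are off in ways that matter. First, the correct local inequality is that every vertex $u\in N_i(v)$ with $i\geq 2$ has at least \emph{two predecessors}: if $x$ were the unique neighbor of $u$ in $N_{i-1}(v)$ and $y\in N_{i-2}(v)$ were a neighbor of $x$, then $uxy$ would be a unique $uy$-path of length $2$, since any common neighbor of $u$ and $y$ must lie in $N_{i-1}(v)$; an in-layer neighbor of $u$ does not help. Your weaker version, ``a second neighbor at distance $i-1$ or $i$,'' therefore does not rule out the unique path, and it also spoils the count, since an in-layer edge can be shared between two deficient vertices. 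With the correct version, charging each vertex at distance at least $2$ with its two predecessor-edges gives the disjoint count $|E(G)|\geq \deg(v)+2(n-\deg(v)-1)=2n-2-\deg(v)$; there is no telescoping, and the entire deficiency of $4$ sits at the root end ($2$ for $v$ and $1$ for each of its two neighbors), not ``two at each end.'' Lemma \ref{Lem:common-neighbors} is not needed here at all.

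Second, and more seriously, the bound $2n-2-\deg(v)$ only reaches $2n-4$ if you can root at a vertex of degree $2$, and your chosen root (a diameter endpoint) need not have degree $2$. The missing idea is a case split on $\delta(G)$, rooting at a minimum-degree vertex: if $\delta(G)=2$ you are done; if $\delta(G)\geq 4$ the degree sum already gives $|E(G)|\geq 2n$; and if $\delta(G)=3$ the count gives only $2n-5$, so one more edge must be found. The paper finds it in the last layer $N_{\epsilon(v)}(v)$: its vertices have no successors and were charged with only two predecessors each, so degree at least $3$ forces either a vertex with three predecessors or an edge inside that layer --- one uncounted edge either way. This case is not vacuous ($Q_3$ has $\delta=3$ and exactly $2n-4$ edges), so without it your argument proves only $|E(G)|\geq 2n-5$.
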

\begin{proof}
Let $v$ be any vertex of $G$. If $u\in N_i(v)$ for $i\geq 2$, then $u$ is adjacent to at least two vertices in $N_{i-1}(v)$. This follows from Lemma \ref{UniquePath} for if $x$ is the unique neighbor of $u$ in $N_{i-1}(v)$, then there is some $y\in N_{i-2}(v)$ so that $uxy$ is the unique $uy$-path of length $2$.

Since there are at least $2|N_{i}(v)|$ edges between $N_i(v)$ and $N_{i-1}(v)$ for all $i\geq 2$, we have
\begin{equation}\label{lowerbound-edges}
|E(G)|\geq \deg(v)+2(n-\deg(v)-1)=2n-2-\deg(v).    
\end{equation}
To complete the proof note that $\delta(G)\geq 2$ by Corollary \ref{2Connected}, since $G$ is $2$-connected. Substituting $\deg(v)=2$ into the bound in \eqref{lowerbound-edges} gives $|E(G)|\geq 2n-4$. If $\delta(G)=3$, then substituting $\deg(v)=3$ into the bound \eqref{lowerbound-edges} gives $|E(G)|\geq 2n-5$. However, in this case the set $N_{\epsilon(v)}(v)$ either contains a vertex with three predecessors, or it contains at least two vertices joined by an edge. In either case, we have at least one more edge than accounted for above, and the bound is improved to $|E(G)|\geq 2n-4$. Finally, if $\delta(G)\geq 4$, then $|E(G)|\geq 2n$.
\end{proof}

The following lemmas are used in characterizing the extremal graphs $G$ with $q(G)=2$. 

\begin{lemma}\label{nondecreasingNjs}
Let $G$ be a connected graph on $n\geq 3$ vertices with $q(G)=2$ and $\delta(G)\geq 3$, and let $v\in V(G)$. If every vertex of $N_i(v)$ for $i=2,3,\ldots,\epsilon(v)-1$ has exactly two predecessors, and each $N_i(v)$ for $i=1, 2,\ldots,\epsilon(v)-1$ is an independent set, then the truncated distance sequence of $v$ is nondecreasing.
\end{lemma}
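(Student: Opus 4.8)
The plan is to establish the chain $d_0 \le d_1 \le \cdots \le d_{\epsilon(v)-1}$ by proving $d_{i-1} \le d_i$ for each $i$ with $1 \le i \le \epsilon(v)-1$, treating the two smallest indices directly and the remaining ones by a structural counting argument driven by Corollary \ref{UniquePath}. Throughout I write $s(w)$ for the number of successors of a vertex $w$, and I use repeatedly that there are exactly $2d_i$ edges between $N_{i-1}(v)$ and $N_i(v)$ whenever $2 \le i \le \epsilon(v)-1$, since each vertex of $N_i(v)$ then has exactly two predecessors.

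First, the easy comparisons. Since $\delta(G)\ge 3$, we have $d_1=\deg(v)\ge 3>1=d_0$, so $d_0\le d_1$. For $d_1\le d_2$, each $w\in N_1(v)$ has $v$ as its only predecessor and, because $N_1(v)$ is independent, all of its remaining $\ge 2$ neighbours lie in $N_2(v)$; hence $s(w)\ge 2$. Counting the edges between $N_1(v)$ and $N_2(v)$ from both sides, namely exactly $2d_2$ from the $N_2(v)$ side and at least $2d_1$ from the $N_1(v)$ side, gives $2d_2\ge 2d_1$.

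For $3\le i\le \epsilon(v)-1$ the same double count is the backbone: the inequality $d_{i-1}\le d_i$ is equivalent to $\sum_{w\in N_{i-1}(v)}s(w)\ge 2d_{i-1}$. The difficulty is that a vertex of $N_{i-1}(v)$ now has two predecessors, so $\delta(G)\ge 3$ only forces $s(w)\ge 1$; the vertices with $s(w)=1$ (call them \emph{deficient}) must be paid for by vertices with $s(w)\ge 3$. The engine for this is the following consequence of Corollary \ref{UniquePath}: if $w$ is deficient with unique successor $u$, and $z$ is either predecessor of $w$, then the length-two path $z\,w\,u$ must be accompanied by a second path from $z$ to $u$ of length two (since $z\not\sim u$, as they lie in nonadjacent distance classes), whose middle vertex is forced to be the other predecessor $x$ of $u$. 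Running this for both predecessors of $w$ shows that $x$ has exactly the same predecessor pair in $N_{i-2}(v)$ as $w$. Thus every deficient vertex has a \emph{twin} sharing its predecessor pair, which forces the two vertices of that pair to have several successors, i.e.\ surplus one level closer to $v$.

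I would convert this into a proof by strong induction on $i$, built up from the base cases above. Assuming $d_0\le\cdots\le d_{i-1}$ and supposing for contradiction that $d_{i-1}>d_i$, the edge count produces at least $2(d_{i-1}-d_i)$ deficient vertices in $N_{i-1}(v)$; the twin phenomenon then collapses their predecessor pairs, forcing vertices of $N_{i-2}(v)$ to carry large successor counts and hence making $d_{i-2}$ too small relative to $d_{i-1}$, contradicting the inductive hypothesis $d_{i-2}\le d_{i-1}$. I expect the main obstacle to be precisely this bookkeeping: turning the statement ``each deficient vertex has a twin with an identical predecessor pair'' into a clean inequality between the deficiency at level $i-1$ and the surplus at level $i-2$, while correctly handling vertices that appear in several predecessor pairs. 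The maximum-independent-set bound (Lemma \ref{maxindependnet}), applied to a union of alternate distance classes, and the common-neighbour bound (Lemma \ref{Lem:common-neighbors}), applied to $N_i(v)$, are the natural auxiliary tools for controlling the number of distinct predecessor pairs should the direct descent need reinforcement.
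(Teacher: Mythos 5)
Your base cases are fine, and the ``twin'' fact you extract from Corollary \ref{UniquePath} (a vertex $w\in N_{i-1}(v)$ with a unique successor $u$ forces the other predecessor $x$ of $u$ to share $w$'s predecessor pair) is correct. But the heart of your argument --- converting the existence of deficient vertices into a contradiction via strong induction and a surplus count at level $i-2$ --- is not carried out, and as sketched it does not close. The number of edges between $N_{i-2}(v)$ and $N_{i-1}(v)$ is exactly $2d_{i-1}$, so showing that some vertices of $N_{i-2}(v)$ have ``large successor counts'' is perfectly consistent with the inductive hypothesis $d_{i-2}\le d_{i-1}$; a small $d_{i-2}$ contradicts nothing. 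You acknowledge this bookkeeping as the main obstacle, and it is: the proposal is a plan with its crucial step missing, not a proof.

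The difficulty is self-inflicted, because from a deficient vertex you only look backward. The paper looks forward and kills deficient vertices outright: if $w\in N_{i-1}(v)$ has a unique successor $u\in N_i(v)$ with $i\le\epsilon(v)-1$, then $u$ has exactly two predecessors and no neighbors inside the independent set $N_i(v)$, so $\deg(u)\ge 3$ forces $u$ to have a successor $y\in N_{i+1}(v)$. Any $wy$-path of length $2$ must pass through $N_i(v)$, and $w$'s only neighbor there is $u$, so $wuy$ is a unique shortest path of length $2$ and $q(G)\ge 3$, a contradiction. (A vertex with no successor at all would have degree at most $2$.) Hence every vertex of $N_{i-1}(v)$ has at least two successors, and your own edge count $2d_{i-1}\le 2d_i$ finishes immediately --- no induction, no twins, no aggregation. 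If you want to salvage your write-up, replace the backward twin analysis with this forward step.
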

\begin{proof}
First note that $d_0=1\leq d_1.$ Suppose $d_{j-1}>d_{j}$ for some $j\in \{2,3,\ldots, \epsilon(v)-1\}.$ Since the number of edges between $N_{j-1}(v)$ and $N_{j}(v)$ is $2d_j$, some vertex $u\in N_{j-1}(v)$ has fewer than 2 neighbors in $N_{j}(v)$. Consider the following cases:
\vskip.3\baselineskip
\noindent \emph{Case 1:} $u$ has no neighbor in $N_{j}(v)$. Since $u$ has no neighbors in $N_{j-1}(v)$ and two neighbors in $N_{j-2}(v)$ (or only one neighbor in $N_{j-2}(v)$ if $j=2$), we have $\textrm{deg}(u) \leq 2,$ a contradiction. 
\vskip.3\baselineskip
\noindent \emph{Case 2:} $u$ has exactly one neighbor $w$ in $N_{j}(v)$. Because $\textrm{deg}(w)\geq 3$, $w$ has a neighbor $x$ in $N_{j+1}(v)$. Then $uwx$ is a unique $ux$-path of length 2 in $G$, which implies that $q(G)\geq 3,$ a contradiction.
\end{proof}

\begin{lemma} \label{noThree3s}
Let $G$ be a connected graph with $\delta(G)\geq 3$, and let $v\in V(G)$.  If every vertex of $N_{i}(v)$ for $i=2,3,\ldots, \epsilon(v)-1$ has exactly two predecessors, and each $N_{i}(v)$ is an independent set, and $|N_{j}(v)| = |N_{j-1}(v)| = |N_{j-2}(v)| = 3$ for some index $3 \leq j \leq \epsilon(v)-1$, then $q(G) \geq 3$.
\end{lemma}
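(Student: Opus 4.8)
The plan is to derive a contradiction by exhibiting a unique shortest path of length $3$ (or longer), which forces $q(G) \geq 4 > 2$ via Lemma 2.1, contradicting $q(G)=2$. The setup gives us three consecutive ``levels'' $N_{j-2}(v), N_{j-1}(v), N_j(v)$, each of size exactly $3$, each independent, and every vertex in levels $2$ through $\epsilon(v)-1$ has exactly two predecessors. I would first label the vertices, say $N_{j-2}(v) = \{a_1, a_2, a_3\}$, $N_{j-1}(v) = \{b_1, b_2, b_3\}$, and $N_j(v) = \{c_1, c_2, c_3\}$, and record the bipartite adjacency structure between consecutive levels. The crucial combinatorial observation is that each $b_i$ has exactly two predecessors among the $a$'s and each $c_i$ has exactly two predecessors among the $b$'s. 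Since there are $3$ vertices in each level and each successor vertex chooses exactly $2$ of the $3$ available predecessors, the bipartite graph between two consecutive triples is the complement of a perfect matching on $3+3$ vertices, i.e., each vertex on either side has degree exactly $2$ (each $a_i$ is missed by exactly one $b$, and by counting, each $a_i$ is also a predecessor of exactly two $b$'s). This rigidity is the heart of the argument.

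Next I would exploit this structure to locate a unique length-$2$ or length-$3$ path and apply Lemma 2.1 or Corollary 2.3. The key tool is Lemma 2.5 (common neighbors): applied to an independent set whose members pairwise share neighbors, it forces a lower bound on the number of distinct common neighbors. Here the natural move is to find two vertices $c_i, c_{i'}$ in $N_j(v)$ that have a \emph{unique} common predecessor in $N_{j-1}(v)$, which would then, combined with the forward structure toward $N_{j+1}(v)$ (using $\delta(G) \ge 3$ so each $c_i$ has a successor), produce a uniquely determined path violating the $q=2$ bound. Because each $c$ picks $2$ of the $3$ $b$'s, any two distinct $c$'s share exactly one common $b$-predecessor; this uniqueness is exactly what drives a unique-shortest-path obstruction. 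Concretely, I expect the contradiction to come from tracing a path $v \to \cdots \to a_i \to b_\ell \to c_m$ where the degree-$2$ constraints at the intermediate levels pin down each edge with no alternative, making the full path from $v$ to $c_m$ the unique shortest path of length $j$, whence $q(G) \ge j+1 \ge 4$.

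The main obstacle will be carefully managing the interplay of the three ``exactly-two-predecessors'' constraints simultaneously across two consecutive bipartite layers, since a unique path must be unique over its \emph{entire} length, not just in the last step. A single-edge uniqueness between $N_{j-1}(v)$ and $N_j(v)$ is not enough; I must propagate uniqueness backward through $N_{j-2}(v)$ as well, and the degree-$2$ structure allows alternative routes unless the two chosen layers align so that a particular $a$--$b$--$c$ chain has no parallel substitute. I anticipate this forces a case analysis on how the two ``complement-of-a-matching'' bipartite graphs (between $a$'s and $b$'s, and between $b$'s and $c$'s) are positioned relative to each other: up to relabeling there are essentially only a couple of configurations, and in each one I would pinpoint a pair of vertices at distance $3$ joined by a unique path of length $3$, or equivalently invoke Corollary 2.2 to rule out a forced length-$2$ path lacking a second route. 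Once a unique shortest path of length $\ge 3$ is identified in every configuration, Lemma 2.1 closes the argument, contradicting $q(G) = 2$ and establishing $q(G) \ge 3$.
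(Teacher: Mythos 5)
Your structural analysis of the two bipartite layers is on the right track (each is $2$-regular on $3+3$ vertices, i.e.\ a $C_6$, and any two vertices on one side share exactly one neighbour on the other), but the mechanism you propose for the contradiction does not work, and the correct one is missing. Your primary plan is to exhibit a unique shortest $v$-to-$c_m$ path of length $j$. No such path can exist under the hypotheses: $c_m$ has two distinct predecessors in $N_{j-1}(v)$, each of which lies on some shortest path from $v$, so there are always at least two shortest $v c_m$-paths. Your fallback --- two vertices $c_i,c_{i'}\in N_j(v)$ with a unique common predecessor --- also does not close, because $c_i$ and $c_{i'}$ may share a successor in $N_{j+1}(v)$, which the hypotheses do not control; you acknowledge needing ``the forward structure toward $N_{j+1}(v)$'' but that structure is exactly what is unavailable. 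The paper instead looks \emph{backward} and picks a pair whose common neighbours are forced into a single level: take $c\in N_j(v)$ adjacent to $y$ but not $x$ in $N_{j-1}(v)=\{x,y,z\}$, and let $r\in N_{j-2}(v)$ be the (unique, by the $C_6$ structure of the lower layer) common predecessor of $x$ and $y$. Every common neighbour of $c$ and $r$ lies in $N_{j-1}(v)$, and $N(c)\cap\{x,y,z\}=\{y,z\}$ while $N(r)\cap\{x,y,z\}=\{x,y\}$, so $cyr$ is the unique shortest $cr$-path of length $2$ and Lemma 2.1 gives $q(G)\ge 3$. No case analysis on the relative alignment of the two $C_6$'s is needed.

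A secondary gap: you justify $2$-regularity of each bipartite layer on the predecessor side ``by counting,'' but counting only gives average degree $2$; a configuration in which one predecessor has three successors and another has one is consistent with the count and with $\delta(G)\ge 3$. The paper rules this out by noting that if $x\in N_{j-1}(v)$ had a single successor $a\in N_j(v)$, then (since $a$ must have a successor $w\in N_{j+1}(v)$ by the degree condition, as $j\le\epsilon(v)-1$) the path $xaw$ would be a unique shortest path of length $2$, again contradicting $q(G)=2$. You need this step before you may assert the $C_6$ structure.
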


\begin{proof}
Let $N_j(v) = \{a,b,c\}$ and $N_{j-1}(v) = \{x,y,z\}$.  If any vertex, say $x$, in $N_{j-1}(v)$ only has one successor, say $a$, in $N_j(v)$, then there exists a unique shortest path of length 2 joining $x$ to a successor of $a$.  That is, each predecessor of $\{a,b,c\}$ must have at least two neighbors in $\{a,b,c\}$ and no predecessor can have three neighbors in $\{a,b,c\}$, so $G[\{a,b,c,x,y,z\}]\cong C_6$.  By the same argument, if $N_{j-2}(v) = \{r,s,t\}$, then $G[\{x,y,z,r,s,t\}]\cong C_6$. Without loss of generality, suppose the edges between $\{a,b,c,x,y,z\}$ are $ax,xb,bz,zc,cy,ya$, and suppose that $r$ is the common predecessor of $x$ and $y$. Then $cyr$ is the unique shortest $cr$-path in $G$, so $q(G) > 2$.  
\end{proof}

\begin{lemma} \label{candles}
Let $G$ be a connected graph on $n\geq 3$ vertices with $q(G) = 2$ and $\delta(G) = 2$.  Let $v \in V(G)$ be a vertex of degree 2.  Suppose every vertex in $N_i(v)$ for $i=2, 3, \ldots \epsilon(v)$ has exactly two predecessors.

\begin{enumerate}
  \item[(a)] If $G$ contains an induced subgraph $H$ that is a double-ended candle, with ends $v$ and a vertex $u \in N_j(v)$ that has no successors, and each $N_i(v)$ is an independent set, then $G=H$.
  \item[(b)] If $G$ contains an induced subgraph $K$ that is a single-ended candle, with ends $v$ and a pair of adjacent vertices $\{w_1, w_2\} \subseteq N_j(v)$ that have no successors, and each $N_i(v)$ is an independent set except for the single edge $w_1w_2$, then $G = K$.
  \item[(c)] If $G$ contains an induced subgraph $J$ that is a single-ended candle, with ends $v$ and a pair of adjacent vertices $\{w_1, w_2\} \subseteq N_j(v)$, and each $N_i(v)$ is an independent set except for the single edge $w_1w_2$, then $G[\cup_{i=0}^j N_i(v))]=J$.
  \item[(d)] If $G$ contains an induced subgraph $L$ that is a double-ended candle, with ends $v$ and a vertex $u \in N_j(v)$ that has no successors, and each $N_i(v)$ is an independent set except for a single edge joining two vertices $\{w_1, w_2\} \subseteq V(L)\cap N_l(v)$ for some $1\leq l < \epsilon(v)$, then $G=L$.
\end{enumerate}
\end{lemma}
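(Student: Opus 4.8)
The plan is to exploit that each of $H$, $K$, $J$, and $L$ is an \emph{induced} subgraph of $G$: for a fixed vertex set, the conclusion ``$G$ equals the candle'' (and, in part (c), ``$G[\cup_{i=0}^{j}N_i(v)]=J$'') is equivalent to the assertion that the distance layers $N_i(v)$ contain \emph{no vertices beyond the candle's $i$-th layer}. So the entire task reduces to controlling the distance partition of $G$ with respect to $v$. I would prove, by induction on $i$, that $N_i(v)$ coincides with the candle's layer $C_i$ together with its induced edges, and---for (a), (b), and (d), where the candle terminates in dead ends---that the candle's last layer is also the last layer of $G$.

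The base case is immediate: $\deg(v)=2$ and $C_1\subseteq N_1(v)$ with $|C_1|=2$ force $N_1(v)=C_1$. The inductive engine is a ``successor principle'' drawn from Corollary~\ref{UniquePath}: if a vertex $p\in N_i(v)$ has a \emph{unique} successor $s\in N_{i+1}(v)$ and $s$ in turn has a successor $t\in N_{i+2}(v)$, then (using layer independence to locate any common neighbour of $p,t$ in $N_{i+1}(v)$) $p\,s\,t$ is the only shortest $p$--$t$ path, contradicting $q(G)=2$; hence no vertex whose sole successor itself has a successor can occur. Combining this with the standing hypothesis that every vertex beyond the first layer has \emph{exactly} two predecessors (so that, when $N_i(v)=\{p,p'\}$, one has $N_{i+1}(v)\subseteq N(p)\cap N(p')$), and with the key observation that a candle vertex two layers ahead already has \emph{both} of its predecessors among the candle's vertices---so that any \emph{extra} vertex in layer $i+1$ can send edges forward only to \emph{other} extra vertices---I would propagate the matched layers forward one step at a time.

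A second, global constraint comes from Lemma~\ref{maxindependnet}. Because the layers are independent (for (a); with a single exceptional edge in (b)--(d)), the union of the even-indexed layers and the union of the odd-indexed layers are each independent sets, up to deleting one endpoint of the exceptional edge. Applying $|S|\le\lfloor n/2\rfloor$ to both forces the two parities to be balanced, and in particular shows that any surplus of extra vertices concentrated in layers of a \emph{single} parity already breaks the bound: if $e\ge 1$ extra vertices all lie in even layers then that union has size $j+e$ while $\lfloor n/2\rfloor=j+\lfloor e/2\rfloor$. This excludes all ``one-sided'' augmentations of the candle and pins $n$ to the candle's parity, consistent with the tightness of Lemma~\ref{Lower1}, and reduces the whole problem to the balanced situation.

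The step I expect to be the main obstacle is excluding the \emph{balanced} families of extra vertices---the prototype being the graph obtained by duplicating \emph{both} degree-$2$ ends of a double-ended candle. Such a configuration keeps every layer independent, endows every vertex beyond the first layer with exactly two predecessors, produces \emph{no} unique shortest path, and meets both the common-neighbour bound of Lemma~\ref{Lem:common-neighbors} and the independence bound of Lemma~\ref{maxindependnet} with equality; hence it survives each of the standard necessary conditions \emph{individually}, and its exclusion is the genuine heart of the lemma. Here I would abandon purely local arguments and instead track the set of extra vertices as a sub-configuration attached to the candle only through shared predecessors, use the successor principle to force each such vertex either to branch or to be a dead end, and then play this off against the candle's rigid structure---the complete bipartite ($C_4$) linkage between consecutive width-two layers and, for (a)/(d), the unique terminal vertex $u$ with its two predecessors---to contradict $q(G)=2$. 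The single intra-layer edge in (b), (c), and (d) is then accommodated by running the same scheme with the even/odd partition corrected by one vertex; part (c) is the lightest, since there the induction need only match the first $j$ layers and need not terminate, the pair $\{w_1,w_2\}$ being permitted successors.
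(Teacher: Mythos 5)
There is a genuine gap, and you have located it yourself: the decisive step---excluding extra vertices that attach to the candle without creating a unique shortest path of length $2$---is left as a plan (``play this off against the candle's rigid structure \dots to contradict $q(G)=2$'') rather than an argument. Your outward induction, the successor principle, and the parity count via Lemma~\ref{maxindependnet} only dispose of one-sided augmentations; the balanced configurations, which you correctly identify as the heart of the matter, are never actually eliminated.

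Moreover, your diagnosis of the prototype obstruction is wrong, and this is exactly where the paper's proof succeeds. You claim that duplicating an end of a double-ended candle ``meets the common-neighbour bound of Lemma~\ref{Lem:common-neighbors} with equality'' and hence survives all the standard necessary conditions. It does not: that lemma must be applied to a \emph{triple}, not a pair. Concretely, the paper works from the far end $u$ inward, takes the layer closest to $u$ in which some candle vertex has a neighbour $z$ outside the candle, and observes that $z$ must be a successor of the candle pair $\{x_s,y_s\}$ in that layer (their predecessor slots are full); ruling out a common successor of $z$ and the candle vertex $a$ one layer further out forces $z$ to be adjacent to \emph{both} $x_s$ and $y_s$. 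Then $a$, $z$, and the candle vertex $b$ two layers back from $z$ (or $v$, or $u$, at the ends) form an independent triple all of whose pairwise common neighbourhoods lie in $\{x_s,y_s\}$, so $\bigl|\bigcup_{i\neq j}\left(N(u_i)\cap N(u_j)\right)\bigr| = 2 < 3$, and Lemma~\ref{Lem:common-neighbors} gives $q(G)\geq 3$. In particular your ``end-duplicated candle'' is killed by this very triple. Without this (or an equivalent) application of the $k=3$ case of Lemma~\ref{Lem:common-neighbors}, your proposal does not close.
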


\proof We note the similarity of the four results and prove the latter three as modifications of the first.
\begin{enumerate}
  \item[(a)] Suppose $G \neq H$.  Denote $\{x_t, y_t\} = V(H) \cap N_{j-t}(v)$ for $1\leq t < j$.  Since $G$ is connected, let $s$ be the minimum value such that $1\leq s < j$  and at least one of $\{x_s, y_s\}$ has a neighbor $z$ in $V(G) \setminus V(H)$.  Since $x_s, y_s$ each already have the maximum number of predecessors, $z\in N_{j-(s-1)}$.  Let $a = x_{s-1}$ if $s \neq 1$, and let $a = u$ otherwise.  If $s\neq 1$, then by the minimality of $s$, any common successor of $a$ and $z$ must be in $H$, which is impossible.  If $s = 1$, then $a$ and $z$ cannot have a common successor since $u$ has no successors.  To prevent a unique $az$-path of length 2, $z$ must therefore have $x_s, y_s$ as its two predecessors.  Let $b = x_{s+1}$ if $s\neq j-1$, and let $b = v$ otherwise.  Then three independent vertices $a, z, b$ have only two pairwise common neighbors and, using Lemma \ref{Lem:common-neighbors}, $q(G) \geq 3$.
  
  \item[(b)] The result follows if we replace $H$ with $K$ and, without loss of generality, $u$ with $w_1$ in the proof of part (a).
  
  \item[(c)] In this case, we replace $H$ with $J$, $G$ with $G[\cup_{i=0}^j N_i(v))]$  and, without loss of generality, $u$ with $w_1$ in the proof of part (a).  The result then follows as in the proof of part (a) with one additional change:  if $s=1$, then $a=w_1$ and $z$ cannot have a common successor because any successor $z'$ of $w_1$ must also be a successor of $w_2$ to prevent a unique $z'w_2$-path of length 2; it follows that $z'$ must have more than 2 predecessors.
  
  \item[(d)] The result follows if we replace $H$ with $L$ in the proof of part (a). \qed
  
\end{enumerate}

We note that the $q$-values of the graphs obtained in case (d) of Lemma \ref{candles} are not known in general. However, when an edge is added to $G_3$ or $G_4$ as shown in Figure \ref{G3andG4withedge}, we have the corresponding matrices $M_1$ and $M_2$, respectively, with two distinct eigenvalues. In particular, $\sigma(M_1)=\{-3^{(3)}, 3^{(3)}\}$ and $\sigma(M_2)=\{-2\sqrt{2}^{(4)}, 2\sqrt{2}^{(4)}\}$, where

\[
M_1=\left[\begin{array}{rrrrrr}
-1 & 2 & 2 & 0 & 0 & 0 \\
2 & 0 & 1 & \sqrt{2} & -\sqrt{2} & 0 \\
2 & 1 & 0 & -\sqrt{2} & \sqrt{2} & 0 \\
0 & \sqrt{2} & -\sqrt{2} & 1 & 0 & 2 \\
0 & -\sqrt{2} & \sqrt{2} & 0 & 1 & 2 \\
0 & 0 & 0 & 2 & 2 & -1
\end{array}\right],
\]
and
\[
M_2=\left[\begin{array}{rrrrrrrr}
2 & \sqrt{2} & \sqrt{2} & 0 & 0 & 0 & 0 & 0 \\
\sqrt{2} & -2 & 0 & -1 & 1 & 0 & 0 & 0 \\
\sqrt{2} & 0 & -2 & 1 & -1 & 0 & 0 & 0 \\
0 & -1 & 1 & 0 & -2 & 1 & 1 & 0 \\
0 & 1 & -1 & -2 & 0 & 1 & 1 & 0 \\
0 & 0 & 0 & 1 & 1 & 2 & 0 & -\sqrt{2} \\
0 & 0 & 0 & 1 & 1 & 0 & 2 & \sqrt{2} \\
0 & 0 & 0 & 0 & 0 & -\sqrt{2} & \sqrt{2} & -2
\end{array}\right].
\]

\begin{figure}[h!]
\centering
\begin{tikzpicture}[scale=0.65, vrtx/.style args = {#1/#2}{%
      circle, draw, fill=black, inner sep=0pt,
      minimum size=6pt, label=#1:#2}]
\node (0) [vrtx=left/1] at (0,0) {};
\node (1) [vrtx=above/2] at (2,1) {};
\node (2) [vrtx=above/4] at (4,1) {};
\node (3) [vrtx=above/6] at (6,0) {};
\node (4) [vrtx=below/5] at (4,-1) {};
\node (5) [vrtx=below/3] at (2,-1)  {};
\draw (0) edge (1);
\draw (1) edge (2);
\draw (2) edge (3);
\draw (3) edge (4);
\draw (4) edge (5);
\draw (5) edge (0);
\draw (2) edge (3);
\draw (1) edge (4);
\draw (1) edge (5);
\draw (2) edge (5);
\end{tikzpicture}
\qquad\qquad
\begin{tikzpicture}[scale=0.65, vrtx/.style args = {#1/#2}{%
      circle, draw, fill=black, inner sep=0pt,
      minimum size=6pt, label=#1:#2}]

\node (0) [vrtx=left/1] at (0,0) {};
\node (1) [vrtx=above/2] at (2,1) {};
\node (2) [vrtx=above/4] at (4,1) {};
\node (3) [vrtx=above/6] at (6,1) {};
\node (4) [vrtx=below/$8$] at (8,0) {};
\node (5) [vrtx=below/$7$] at (6,-1)  {};
\node (6) [vrtx=below/$5$] at (4,-1) {};
\node (7) [vrtx=below/$3$] at (2,-1)  {};

\draw (0) edge (1);
\draw (1) edge (2);
\draw (2) edge (3);
\draw (3) edge (4);
\draw (4) edge (5);
\draw (5) edge (6);
\draw (7) edge (7);
\draw (7) edge (0);
\draw (1) edge (6);
\draw (7) edge (2);
\draw (7) edge (6);
\draw (2) edge (6);
\draw (2) edge (5);
\draw (3) edge (6);
\end{tikzpicture}
\caption{$G_3$ and $G_4$ with an extra edge.}\label{G3andG4withedge}
\end{figure}

\vskip.3\baselineskip
 Theorem \ref{EvenCharacterization} gives a characterization of graphs $G$ with $q(G)=2$ and $|E(G)|= 2n-4$.
\begin{theorem}\label{EvenCharacterization}
If $G$ is a connected graph on an even number of vertices $n\geq 4$ with $q(G)=2$ and $|E(G)|=2n-4$, then either $G=Q_3$ or $G$ is a double-ended candle.
\end{theorem}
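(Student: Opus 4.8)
The plan is to determine $\delta(G)$ and then treat $\delta(G)=2$ and $\delta(G)=3$ separately, the first producing the double-ended candles and the second producing $Q_3$. Since $G$ is $2$-connected (Corollary \ref{2Connected}) we have $\delta(G)\geq 2$, and $\delta(G)\geq 4$ would force $|E(G)|\geq 2n$; hence $\delta(G)\in\{2,3\}$. Fixing a vertex $v$ of minimum degree, I would read off the equality conditions from the proof of Lemma \ref{Lower1}. If $\deg(v)=2$, then equality in \eqref{lowerbound-edges} forces every $N_i(v)$ to be independent and every vertex of $N_i(v)$ with $i\geq 2$ to have exactly two predecessors. If $\deg(v)=3$, the bound reads $2n-5$, so exactly one extra edge is present; since each successor-free vertex of the last layer would otherwise have degree $2<\delta(G)$, that single edge must repair the last layer, so either $N_{\epsilon(v)}(v)$ is a single vertex with three predecessors, or it is one edge $w_1w_2$ whose endpoints have two predecessors apiece, all other layers being independent with exactly two predecessors.

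For $\delta(G)=2$, independence of all layers makes $G$ bipartite with parts $\bigcup_{i\text{ even}}N_i(v)$ and $\bigcup_{i\text{ odd}}N_i(v)$; each is independent and they partition $V(G)$, so Lemma \ref{maxindependnet} forces both to have size $n/2$. The crux is to show the distance sequence equals $(1,2,\dots,2,1)$. Two-connectivity gives $d_i\geq 2$ for interior $i$ (a singleton interior layer would be a cut vertex), and $d_0=1$, $d_1=2$. To obtain $d_i\leq 2$ I would exploit the rigidity of the exactly-two-predecessors condition: if a layer of size $\geq 3$ followed a layer of size $2$, all of its vertices would be forced to be twins over that previous layer, and propagating this (together with the equal-class count $n/2$ and the parity $n=2k$) produces a vertex with three predecessors, contradicting equality. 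A last layer of size $\geq 3$ is killed directly by Lemma \ref{Lem:common-neighbors} (its vertices are pairwise twins over the two available predecessors), and a last layer of size $2$ makes $n$ odd, so $d_{\epsilon(v)}=1$. Consecutive size-$2$ layers are then joined as $K_{2,2}$ (four edges among two pairs, two predecessors each), so $G$ contains the double-ended candle $H$ on these layers as an induced subgraph, with ends $v$ and the unique vertex of $N_{\epsilon(v)}(v)$, and Lemma \ref{candles}(a) yields $G=H$.

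For $\delta(G)=3$, I would show the single-vertex last layer yields $Q_3$ and the single-edge last layer is impossible. In the first case all interior layers are independent with exactly two predecessors, so Lemma \ref{nondecreasingNjs} makes the truncated distance sequence nondecreasing, beginning $1,3,\dots$; Lemma \ref{noThree3s} forbids three consecutive layers of size $3$ and Lemma \ref{maxindependnet} caps the layer sizes. Meanwhile Corollary \ref{UniquePath} applied to the three pairs inside $N_1(v)$ (which already share $v$ and so must share a second neighbor, necessarily in $N_2(v)$) forces $d_2=3$ with three distinct predecessor-pairs, so the bipartite graph between $N_1(v)$ and $N_2(v)$ is $2$-regular, hence a $6$-cycle. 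These constraints pin the sequence to $(1,3,3,1)$ with $n=8$, and capping the hexagon with $v$ and the degree-$3$ vertex of $N_3(v)$ gives exactly $Q_3$. In the single-edge case the same forcing makes $N_2(v)$ carry all three predecessor-pairs and prevents the sequence from extending, so $\epsilon(v)=3$; but then the two vertices of the last layer can supply a common second neighbor to at most two of the three pairs inside $N_2(v)$, again producing a unique shortest path of length $2$---a contradiction (the degenerate $\epsilon(v)=2$ subcase falls to a direct degree count).

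The main obstacle is the distance-sequence rigidity in the $\delta(G)=2$ case. The natural tools, Lemmas \ref{nondecreasingNjs} and \ref{noThree3s}, assume $\delta(G)\geq 3$ and so are unavailable here; I must instead reprove their unique-shortest-path and common-neighbor content directly, while also ruling out a premature successor-free interior vertex (which is permitted once $\delta(G)=2$ and which would otherwise short-circuit the candle). Routing the final identification through Lemma \ref{candles}(a), rather than checking every candle adjacency by hand, is what keeps this step tractable.
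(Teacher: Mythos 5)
Your Case 2 ($\delta(G)=3$) tracks the paper's argument closely: locate the single extra edge in the last layer, split into the three-predecessor and internal-edge subcases, and use Lemmas \ref{nondecreasingNjs}, \ref{noThree3s}, and \ref{Lem:common-neighbors} to force the distance sequence $(1,3,3,1)$ (hence $Q_3$) in the first subcase and a contradiction in the second. The genuine problem is your Case 1. You propose to first prove that the distance sequence is $(1,2,\dots,2,1)$ and only then assemble the candle, but the step ``$d_i\leq 2$ for interior $i$'' is exactly where your argument fails. If $N_{i-1}(v)=\{x,y\}$ and $|N_i(v)|\geq 3$, then indeed every vertex of $N_i(v)$ has predecessor set $\{x,y\}$, but this does \emph{not} ``produce a vertex with three predecessors'': Lemma \ref{Lem:common-neighbors} applied to three such vertices $a,b,c$ only demands a third vertex in the union of their pairwise common neighborhoods, and a common \emph{successor} of some pair supplies it without any vertex acquiring a third predecessor. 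The contradiction one eventually reaches lives several layers higher up (unique length-$2$ paths among those successors cascade toward the last layer), so the local ``twins'' observation does not close the argument; likewise your claim that a last layer of size $\geq 3$ dies ``directly'' by Lemma \ref{Lem:common-neighbors} presupposes that its predecessor layer has size $2$, which is part of what you are trying to prove.

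The paper's proof avoids all of this by never controlling the layer sizes a priori. It picks any successor-free vertex $u\in N_j(v)$ and traces back: the two predecessors of $u$ must share both of their own predecessors (else a unique shortest path of length $2$ appears), and iterating down to $v$ exhibits an induced double-ended candle $H$ with ends $u$ and $v$; Lemma \ref{candles}(a) then gives $G=H$, and the layer sizes, the position of $u$, and the parity all come out as consequences rather than hypotheses. In particular there is no need to ``rule out a premature successor-free interior vertex'' --- the trace-back works from wherever such a vertex sits. As written, your Case 1 has a gap that the sketched mechanism cannot fill; replacing the distance-sequence step with this trace-back repairs it.
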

\begin{proof}
In order for a connected graph $G$ with $q(G)=2$ to have $2n-4$ edges, we must have $\delta(G)\in\{2,3\}$. We treat each case separately.
\vskip.3\baselineskip
\noindent\emph{Case 1:} $\delta(G)=2$.
\vskip.3\baselineskip
Let $v$ be a vertex of degree $2$ in $G$.  Since $G$ has exactly $2n-4$ edges, every vertex in $N_i(v)$ with $i\geq 2$ has exactly two predecessors and each $N_i(v)$ is an independent set in $G$.

Suppose for some $j\geq 2$, $u\in N_j(v)$ has no successors. Then $u$ has exactly two neighbors $x_1,y_1$, both of which lie in $N_{j-1}(v)$. By the assumption, both $x_1$ and $y_1$ have exactly two neighbors in $N_{j-2}(v)$ (unless $j-2=0$ in which case $x_1,y_1$ are both adjacent to $v$). Note that if $z\in N_{j-2}(v)$ is adjacent to $x_1$ but not to $y_1$, then $zx_1u$ is the unique $zu$-path of length $2$. Thus there are vertices $x_2,y_2\in N_{j-2}(v)$ so that $\{x_2,y_2\}=N(x_1)\cap N_{j-2}(v)=N(y_1)\cap N_{j-2}(v)$.  Again, from the assumption, either $x_2,y_2\in N_1(v)$, or they both have exactly two neighbors in $N_{j-3}(v)$. Applying the same argument, we see that there are $x_3,y_3\in N_{j-3}(v)$ so that $\{x_3,y_3\}=N(x_2)\cap N_{j-3}(v)=N(y_2)\cap N_{j-3}(v)$. Iterating this argument shows that there is $H\subseteq G$ where $H$ is a double-ended candle ($u$ and $v$ are the ends of $H$).  By Lemma \ref{candles} part (a), $G=H$.

\vskip.3\baselineskip
\noindent\emph{Case 2:} $\delta(G)=3$.
\vskip.3\baselineskip
Let $v$ be a vertex of degree $3$ in $G$. Substituting $\deg(v)=3$ into the inequality in \eqref{lowerbound-edges} gives $|E(G)|\geq 2n-5$. Since $|E(G)|=2n-4$ we conclude that either \emph{2(a)}: every vertex in $N_i(v)$ for $i\geq 2$ has exactly two predecessors except for one vertex that has exactly three predecessors, and every $N_i(v)$ is an independent set in $G$; or \emph{2(b)}: every vertex in $N_i(v)$ has exactly two predecessors, and $N_i(v)$ is an independent set for all $i\neq j$, while $N_j(v)$ has exactly one internal edge.
\vskip.3\baselineskip
\noindent\emph{Case 2(a):} There is exactly one vertex $u$ with three predecessors, and every $N_i(v)$ is independent.
\vskip.3\baselineskip
Every vertex besides $u$ has a successor so $N_{\epsilon(v)}(v)=\{u\}$ and $|N_{\epsilon(v)-1}(v)|=3.$ Also $\textrm{deg}(v)=3,$ so $|N_{0}(v)|=1,$ and $|N_{1}(v)|=3.$ Since by Lemma \ref{nondecreasingNjs} the truncated distance sequence of $v$ is nondecreasing, and by Lemma \ref{noThree3s} we cannot have three consecutive distance sets of size $3$, the only possible distance sequences are $(1,3,1)$ and $(1,3,3,1)$. In the first case, $G\cong K_{2,3}$. From Lemma \ref{Lem:common-neighbors} we see $q(K_{2,3})>2$, giving a contradiction. In the second case, $G[N_{1}(v)\cup N_{2}(v)]\cong C_6$ and it follows that $G \cong Q_3.$

\vskip.3\baselineskip
\noindent\emph{Case 2(b):} Every vertex in $N_i(v)$ with $i\geq 2$ has exactly two predecessors, and $N_i(v)$ is an independent set for all $i\neq j$, while $N_j(v)$ contains exactly one edge.
\vskip.3\baselineskip
Let $w_1w_2$ be the edge in $N_j(v)$. Since $\delta(G)=3$ and every vertex in $N_i(v)$ for $i\geq 2$ has exactly two predecessors, there is no vertex $z\notin \{w_1,w_2\}$ with no successors. It follows that $\epsilon(v)=j$, $N_j(v)=\{w_1,w_2\}$, and since $\deg(v)=3$ we have $j\geq 2$. If either $w_1$ or $w_2$ has a neighbor that is not a neighbor of the other, we have a unique shortest path of length $2$, which gives a contradiction. Thus there are vertices $x,y\in N_{j-1}(v)$ so that $w_1,w_2$ are both adjacent to both of $x$ and $y$. 
In fact, since $w_1$ and $w_2$ have used the maximum number of predecessors, we have $N_{j-1}(v)=\{x,y\}.$ Now $|N_1(v)|=3$ and $|N_{j-1}(v)|=2$ with $j\geq 3$, which contradicts Lemma \ref{nondecreasingNjs}.
\end{proof}

By Lemma \ref{Lower1}, a connected graph $G$ on $n\geq 3$ vertices with $q(G)=2$ has at least $2n-4$ edges. Theorem \ref{EvenCharacterization} gives a characterization of graphs $G$ with $q(G)=2$ and $|E(G)|=2n-4$. Therefore, a graph $G$ on $n\geq 3$ vertices with $q(G)=2$ that is not listed in Theorem \ref{EvenCharacterization} has at least $2n-3$ edges, in particular graphs on odd number of vertices. Theorem \ref{OddCharacterization} gives a characterization of graphs $G$ with $q(G)=2$ and $|E(G)|= 2n-3$. 
 
\begin{corollary}\label{Lower2}
If $G$ is a connected graph on an odd number of vertices $n\geq 3$ with $q(G)=2$, then $|E(G)|\geq 2n-3$.
\end{corollary}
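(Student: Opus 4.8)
The plan is to leverage the lower bound of Lemma \ref{Lower1} together with the extremal characterization established in Theorem \ref{EvenCharacterization}, exploiting the fact that every extremal graph happens to have an \emph{even} number of vertices. Lemma \ref{Lower1} already gives $|E(G)| \geq 2n-4$ for any connected graph on $n \geq 3$ vertices with $q(G)=2$, so it suffices to rule out the boundary case $|E(G)| = 2n-4$ when $n$ is odd, which would then force $|E(G)| \geq 2n-3$.

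First I would argue by contradiction, supposing $|E(G)| = 2n-4$. Since $G$ is $2$-connected by Corollary \ref{2Connected}, we have $\delta(G) \geq 2$; and if $\delta(G) \geq 4$ then a degree-sum count gives $|E(G)| \geq 2n$, so the equality $|E(G)| = 2n-4$ forces $\delta(G) \in \{2,3\}$. This is precisely the dichotomy on which the proof of Theorem \ref{EvenCharacterization} is built. I would then reuse that argument directly: when $\delta(G)=2$ it shows $G$ is a double-ended candle, and when $\delta(G)=3$ it shows $G \cong Q_3$ (the subcase producing $K_{2,3}$ being excluded because $q(K_{2,3}) > 2$).

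The crucial observation is that none of the steps in that argument invokes the parity of $n$; the hypothesis that $n$ is even in Theorem \ref{EvenCharacterization} is recorded only because its conclusion consists of even-order graphs. Thus the same reasoning classifies \emph{every} connected graph with $q(G)=2$ and $|E(G)| = 2n-4$, irrespective of parity, as either $Q_3$ or a double-ended candle. Since $|V(Q_3)| = 8$ and $|V(G_k)| = 2k$ by Remark \ref{CandleDensity}, every such graph has an even number of vertices, contradicting the assumption that $n$ is odd. Hence $|E(G)| \neq 2n-4$, and combined with Lemma \ref{Lower1} this yields $|E(G)| \geq 2n-3$.

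The main obstacle is the reliance on Theorem \ref{EvenCharacterization} outside its literally stated hypotheses: one must confirm that its proof is genuinely parity-free, so that the extremal list $\{Q_3\} \cup \{\text{double-ended candles}\}$ is complete for odd $n$ as well. This is the only delicate point; once it is granted, the corollary follows immediately as a counting-and-parity consequence.
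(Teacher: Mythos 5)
Your proposal is correct and follows essentially the same route as the paper: the authors also combine Lemma \ref{Lower1} with the extremal classification of Theorem \ref{EvenCharacterization} and observe that every graph attaining $2n-4$ edges has even order. Your explicit check that the proof of Theorem \ref{EvenCharacterization} never uses the parity of $n$ is a point the paper leaves implicit, so your write-up is, if anything, slightly more careful.
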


\begin{lemma}\label{ind-distance-part} Let $G$ be a connected graph on an odd number of vertices $n \ge 3$ with $q(G)=2$. If for a vertex $v\in V(G)$ each distance set $N_i(v)$ for $i=1, \dots, \epsilon(v)$ is an independent set, then $q(G) \ge 3$. 
\end{lemma}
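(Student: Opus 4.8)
The plan is to show that the hypotheses force $G$ to be bipartite, and then to contradict the independence-number bound of Lemma \ref{maxindependnet}. The crucial (and purely structural) observation is that a distance partition is a layering in which every edge joins vertices in the same level or in consecutive levels: if $u\in N_i(v)$ and $w\in N_j(v)$ are adjacent with $i\le j$, then $d_G(v,w)\le d_G(v,u)+1=i+1$, so $j\le i+1$ and hence $|i-j|\le 1$. By hypothesis each $N_i(v)$ is an independent set, so no edge lies within a single level. Consequently every edge of $G$ joins $N_{i-1}(v)$ to $N_i(v)$ for some $i$.

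It follows that coloring each vertex of $N_i(v)$ by the parity of $i$ is a proper $2$-coloring, so $G$ is bipartite with parts $A=\bigcup_{i\text{ even}}N_i(v)$ and $B=\bigcup_{i\text{ odd}}N_i(v)$, both of which are independent sets. Since $|A|+|B|=n$ with $n$ odd, the two parts cannot both have size at most $\lfloor n/2\rfloor=\frac{n-1}{2}$ (that would force $|A|+|B|\le n-1$). Hence the larger part, say $A$, satisfies $|A|\ge\frac{n+1}{2}>\frac{n-1}{2}=\lfloor n/2\rfloor$, producing an independent set of size exceeding $\lfloor n/2\rfloor$.

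Finally, I would invoke Lemma \ref{maxindependnet} in contrapositive form: the existence of an independent set of size greater than $\lfloor n/2\rfloor$ is incompatible with $q(G)=2$. Since $G$ is connected on $n\ge 3$ vertices it contains an edge, so $q(G)\ge 2$; combined with $q(G)\neq 2$ this gives $q(G)\ge 3$, as claimed. I do not expect a genuine obstacle here: the only step requiring care is the bipartiteness claim, namely that the independence hypothesis on each level, together with the layering property of distance partitions, leaves no edge outside of consecutive levels. Everything after that is the parity count and a single appeal to Lemma \ref{maxindependnet}; note in particular that the spectral hypothesis $q(G)=2$ is used only to license that bound, while the derivation of bipartiteness and of the oversized independent set needs no spectral input at all.
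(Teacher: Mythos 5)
Your proof is correct and follows essentially the same route as the paper's: partition $V(G)$ into the union of even-indexed and odd-indexed distance sets, observe both parts are independent, use the odd order of $n$ to force one part to exceed $\lfloor n/2\rfloor$, and conclude via Lemma \ref{maxindependnet}. If anything, you are slightly more careful than the paper in justifying that the two parity classes are independent (via the observation that edges only join consecutive or equal levels), a step the paper's proof leaves implicit.
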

\begin{proof}
 Let $\displaystyle {A= \cup_{i \ odd} \, N_i(v)}$ and $\displaystyle B = \cup_{i \ even}\,  N_i(v)$.  Then $A \cup B = V(G)$ and $A \cap B = \emptyset$.  Since the sets $N_i(v) $ are independent, so are $A$ and $B$.  Since $|A| + |B| = n$, we have $|A| \neq |B|$.  Then either $|A| > n/2$ or $|B| > n/2$.  By Lemma \ref{maxindependnet}, $q(G) \geq 3$.
\end{proof}

\begin{theorem}\label{OddCharacterization}
If $G$ is a connected graph on an odd number of vertices $n\geq 3$ with $q(G)=2$ and $|E(G)|=2n-3$, then $G$ is a single-ended candle.
\end{theorem}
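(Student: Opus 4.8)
The plan is to mirror the proof of Theorem \ref{EvenCharacterization}, with the parity of $n$ entering decisively through Lemma \ref{ind-distance-part}. First I would pin down the minimum degree: Corollary \ref{2Connected} gives $\delta(G)\ge 2$, while $\delta(G)\ge 4$ would force $|E(G)|\ge 2n>2n-3$; hence $\delta(G)\in\{2,3\}$, and I treat the two cases separately. In each case I fix a vertex $v$ of minimum degree and analyze its distance partition, accounting for edges layer by layer exactly as in \eqref{lowerbound-edges}.

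For $\delta(G)=2$, take $\deg(v)=2$, so that $|E(G)|=(2n-4)+1$ and, since every edge lies within a distance set or between consecutive ones, there is exactly one surplus edge over the extremal count. This surplus is realized either (i) as a single vertex of some $N_i(v)$, $i\ge 2$, having a third predecessor, with every distance set independent, or (ii) as a single edge inside one distance set $N_j(v)$, with every vertex of $\cup_{i\ge 2}N_i(v)$ having exactly two predecessors. In case (i) all of $N_1(v),\dots,N_{\epsilon(v)}(v)$ are independent, so Lemma \ref{ind-distance-part} forces $q(G)\ge 3$, a contradiction; this is precisely where oddness is used. So we are in case (ii), which is exactly the hypothesis setup of Lemma \ref{candles}. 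If the pair $\{w_1,w_2\}$ carrying the internal edge has no successors, a backward construction---showing via Corollary \ref{UniquePath} that $w_1,w_2$ share both predecessors, that these in turn share both predecessors, and so on down to $v$---produces an induced single-ended candle with ends $v$ and $\{w_1,w_2\}$, and Lemma \ref{candles}(b) upgrades this to $G$ being that single-ended candle. If instead $\{w_1,w_2\}$ has a successor, I would select a successor-free vertex $u$ beyond layer $j$, run the backward construction from $u$ so that it threads through the edge $w_1w_2$, and exhibit an induced double-ended-candle-plus-edge $L$ with ends $v$ and $u$; Lemma \ref{candles}(d) then gives $G=L$. Since a double-ended candle has an even number of vertices and adding one edge preserves this, $G=L$ would make $n$ even, a contradiction. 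Thus case (ii) produces exactly the single-ended candles.

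For $\delta(G)=3$, take $\deg(v)=3$; now \eqref{lowerbound-edges} leaves a surplus of two edges, split between extra predecessors and internal edges. Lemma \ref{ind-distance-part} again eliminates the fully-independent configurations (the ones that in the even case yielded $Q_3$ via the distance sequence $(1,3,3,1)$), so at least one internal edge is forced, leaving only the distributions ``one internal edge and one extra predecessor'' and ``two internal edges.'' I would rule out each by a case analysis parallel to Case 2 of Theorem \ref{EvenCharacterization}: Lemmas \ref{nondecreasingNjs} and \ref{noThree3s} constrain the truncated distance sequence on the independent layers, while Corollary \ref{UniquePath} and Lemma \ref{Lem:common-neighbors} forbid the remaining placements of the internal edge(s). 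Because every single-ended candle has minimum degree $2$, the outcome is that $\delta(G)=3$ cannot occur, which completes the characterization.

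The main obstacle is the bookkeeping in the $\delta(G)=2$, case (ii) split: one must confirm that the tight edge count forces each distance set to have size at most two so the backward construction does not branch, and that, when $\{w_1,w_2\}$ has a successor, the construction from a successor-free vertex genuinely lands on $w_1w_2$ (so that the parity contradiction through Lemma \ref{candles}(d) is available) rather than producing a candle disjoint from the edge. The cleanest route is to phrase both sub-cases so that the appropriate part of Lemma \ref{candles} delivers ``$G$ equals the candle,'' after which the evenness of double-ended candles alone forces the single-ended outcome for odd $n$. A secondary obstacle is the finite but delicate $\delta(G)=3$ analysis, where the forced internal edges---absent in the clean independent-layer structures of the even case---must be shown incompatible with $q(G)=2$.
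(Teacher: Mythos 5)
Your overall architecture matches the paper's proof: the same reduction to $\delta(G)\in\{2,3\}$, the same use of Lemma \ref{ind-distance-part} to kill the all-independent-layer configurations (this is indeed where oddness enters), and the same deployment of Lemma \ref{candles} in the $\delta(G)=2$ case. Your handling of Case~1(b) when $\{w_1,w_2\}$ has successors is a mild variant of the paper's: the paper first applies Lemma \ref{candles}(c) to get $H=G[\cup_{i=0}^j N_i(v)]$ (hence $N_j(v)=\{w_1,w_2\}$), then builds a \emph{second} distance partition from a degree-$2$ vertex $v'$, shows $w_1,w_2$ lie in a common level of that partition, traces a second single-ended candle $H'$ back to $v'$, and applies part (d) to $H\cup H'$; you instead trace backward from a successor-free $u$ in the original partition. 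Your version can be made to work, but only after Lemma \ref{candles}(c) is invoked to pin down $N_j(v)=\{w_1,w_2\}$ --- otherwise nothing forces your backward trace to ``thread through'' the edge $w_1w_2$, and part (d) would not apply. You flag this yourself; it is fixable, and you correctly use the parity of $|V(G_k)|$ to finish.

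The genuine gap is the $\delta(G)=3$ case. You assert that the surviving distributions of the two surplus edges (``one internal edge plus one extra predecessor'' and ``two internal edges'') can be eliminated ``by a case analysis parallel to Case 2 of Theorem \ref{EvenCharacterization}'' using Lemmas \ref{nondecreasingNjs}, \ref{noThree3s}, \ref{Lem:common-neighbors} and Corollary \ref{UniquePath}. There is no such parallel: in the even theorem the $\delta=3$ analysis only ever confronts one surplus edge, and both subcases collapse quickly (to $Q_3$ or via the nondecreasing distance sequence). Here, the paper's Cases 2(c) and 2(d) are the technical core of the whole theorem and require a new argument not supplied by any of the cited lemmas: one first extracts a single-ended candle $H$ through the internal edge, then shows the third neighbor $c_1$ of $v$ outside $H$ forces a cascade of common successors $c_2, c_3, c_4,\ldots$ (each forced by Corollary \ref{UniquePath} because the candle vertices already carry their maximum number of predecessors), and each $c_{i+1}$ is forced to acquire a third predecessor, eventually contradicting the assumed edge count. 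Case 2(e) also needs its own short argument (a $C_6$ between the last two levels yielding a unique shortest path). Without these arguments your proof does not rule out $\delta(G)=3$, so the characterization is not established; ``every single-ended candle has minimum degree $2$'' tells you what the answer must be, not why no other graph occurs.
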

\begin{proof}
Suppose $G$ is a connected graph with an odd number of vertices, $q(G)=2$, and $|E(G)|=2n-3$. This implies that $\delta(G)\in\{2,3\}$.
\vskip.3\baselineskip
\noindent\emph{Case 1:} $\delta(G)=2$.
\vskip.3\baselineskip

Let $v$ be a vertex of degree $2$. Then either \emph{1(a):} every vertex in $N_i(v)$ for $i\geq 2$ has exactly two predecessors except for one vertex that has three predecessors, and each $N_i(v)$ is independent; or \emph{1(b):} every vertex in $N_i(v)$ has exactly two predecessors, and every $N_i(v)$ with $i\neq j$ is independent while $N_j(v)$ contains exactly one edge.

\vskip.3\baselineskip
\noindent\emph{Case 1(a):} By Lemma \ref{ind-distance-part}, $q(G)\geq 3$. 
\vskip.3\baselineskip

\noindent\emph{Case 1(b):} 
First we note that if $j=1$, then $N_1(v)=N(v)=\{w_1,w_2\}$ and $G[\{v\}\cup N_1(v)]$ is isomorphic to $K_3$, a single-ended candle. Now suppose $j\geq 2,$ and let $w_1w_2$ be the edge in $N_j(v)$.  We repeat the construction in Case 1 of Theorem \ref{EvenCharacterization} with a slight modification:  replace $u$ with $w_1$ and note that the two predecessors of $w_1$ must also be predecessors of $w_2$, to avoid a unique shortest path of length 2 between either predecessor and $w_2$.  The construction yields a single-ended candle $H$ starting at $v$, and ending at $\{w_1,w_2\}$.

If $j = \epsilon(v)$, then $H=G$ by Lemma \ref{candles} part (b).  Suppose $j<\epsilon(v)$.  We know $H = G[\cup_{i=0}^j N_i(v))]$ by Lemma \ref{candles} part (c).  In particular, $N_j(v)=\{w_1,w_2\}$. 
Choose an arbitrary vertex $v'\neq v$ with $\deg(v')=2$ (any vertex $v'\in N_{\epsilon(v)}(v)$ has degree $2$). Since $H = G[\cup_{i=0}^j N_i(v))]$, we have $d_G(v,v')>j$. Construct the distance partition of $G$ with respect to $v'$.

Vertices $w_1$ and $w_2$ are adjacent, so their distances from $v'$ differ by at most $1$. Since $N_j(v)=\{w_1,w_2\}$, every $y\in N_{j+1}(v)$ is adjacent to both $w_1$ and $w_2$. Let $y\in N_{j+1}(v)$ lie on a shortest $vv'$-path $P$. Suppose $w_1\in P$, and let $P'$ be a shortest $vw_2$-path. Then $vP'w_2yPv'$ is a shortest $vv'$-path. Thus $w_1$ and $w_2$ are at the same distance $k$ from $v'$.

Therefore in the distance partition of $G$ from $v'$, $w_1$ and $w_2$ lie in the same distance set, and are connected by an edge. Following the same argument from the start of the proof, we conclude that we can construct a single-ended candle $H'$ by working back towards $v'$ from $\{w_1,w_2\}$ in the same way we constructed $H$. Moreover, $H'=G[\cup_{i=0}^k N_i(v')]$. Any $vv'$-path must intersect $\{w_1, w_2\} = N_j(v)$.  If there exists $z\in V(H)\cap V(H') \setminus \{w_1, w_2\}$, then $d_G(z, v) < j$ and $d_G(z, v') < k$, and there exist $zv$- and $zv'$-paths that do not intersect $\{w_1, w_2\}$.  But then there is a $vv'$-path through $z$ that does not intersect $\{w_1, w_2\}$, a contradiction.  So $V(H)\cap V(H')=G[\{w_1,w_2\}]$ and $H\cup H'$ is a double-ended candle with an additional edge joining $w_1$ and $w_2$.  By Lemma \ref{candles} part (d), $H\cup H' = G$.  This is impossible from our assumptions, as $G$ has an odd number of vertices. 

\vskip.3\baselineskip
\noindent\emph{Case 2:} $\delta(G)=3$. 
\vskip.3\baselineskip

Let $v$ be a vertex of degree $3$. Now the bound in \eqref{lowerbound-edges} becomes $|E(G)|\geq 2n-5$. So we have two additional edges to account for. These edges can appear as \emph{2(a)}: one vertex with four predecessors; \emph{2(b)}: two vertices with three predecessors; \emph{2(c)}: one vertex with three predecessors and one edge internal to some $N_j(v)$; \emph{2(d)}: two edges internal to some $N_j(v)$ and $N_t(v)$ that share no endpoints; or \emph{2(e)}: two edges internal to some $N_j(v)$ that share an endpoint.
\vskip.3\baselineskip
\noindent\emph{Cases 2(a) and 2(b):} In these cases $q(G)\geq 3$ by Lemma \ref{ind-distance-part}. 
\vskip.3\baselineskip
\noindent\emph{Case 2(c):} There is exactly one vertex $u$ with three predecessors, and one edge $xy$ internal to some $N_j(v)$. \vskip.3\baselineskip
Note that if $u=x$, then all three predecessors of $x$ must also be adjacent to $y$, contradicting that exactly one vertex has three predecessors.  So $u\neq x$ and, similarly, $u\neq y$.  Then $x$ and $y$ each have exactly two predecessors or are in $N_1(v)$.  Following our logic from earlier, we see that as we trace back from $\{x,y\}$ to $v$, we obtain a single-ended candle $H \subseteq G$.  By the same argument that prevented $u$ from being $x$ or $y$, it is impossible for $u$ to be any vertex in $H$.

We claim that it is impossible for $u$ to be the unique vertex in $G$ with three predecessors.  Let $\{a_1, b_1, c_1\} = N_1(v)$, and assume, without loss of generality, that $\{a_1, b_1\} = V(H) \cap N_1(v)$.  Then $c_1$ must have two successors as $\delta(G) = 3$, which also forces $\{a_1, b_1\}$ to have those successors.  Consider $\{a_2, b_2, c_2, d_2\} \subseteq N_2(v)$ where $\{a_2, b_2\} = V(H) \cap N_2(v)$ are successors of $\{a_1, b_1\}$, and $\{c_2, d_2\}$, disjoint from $\{a_2, b_2\}$, are successors of $c_1$.  Since $c_2$ and $d_2$ cannot both have three predecessors, we may assume, without loss of generality, that $c_2$ has exactly two predecessors, one of which, say $b_1$, must be in $H$.  Both $b_2$ and $c_2$ already have their maximum number of predecessors (with $a_1$ not a predecessor of $c_2$ and $c_1$ not a predecessor of $b_2$), so to prevent the unique shortest path $b_2b_1c_2$ of length 2, $b_2$ and $c_2$ must share a common successor $c_3 \in N_3(v)$.  This creates paths of length 2 between $c_3$ and both $a_1$ and $c_1$.  Again, since $b_2$ and $c_2$ already have the maximum number of predecessors, this forces $c_3$ to have three predecessors, so that $u = c_3$.  Moreover, since $a_2$ already has the maximum number of predecessors, the third predecessor of $c_3$ cannot be $a_2$.

If $(a_2, b_2) = (x,y)$, then $c_3$ must be adjacent to $a_2$, which is a contradiction.  So $(a_2, b_2) \neq (x,y)$, in which case $\delta(G) = 3$ implies they must have successors.  Suppose $\{a_2, b_2\}$ have successors $\{a_3, b_3\} = V(H) \cap N_3(v)$.  Then $b_3b_2c_3$ is a path of length 2, and we repeat our argument from the previous paragraph.  Both $b_3$ and $c_3$ have the maximum number of predecessors (with $a_2$ not a predecessor of $c_3$ and $c_2$ not a predecessor of $b_3$), so to prevent the unique shortest path $b_3b_2c_3$ of length 2, $b_3$ and $c_3$ must share a common successor $c_4 \in N_4(v)$.  This creates paths of length 2 between $c_4$ and both $a_2$ and $c_2$.  As before, since $b_3$ and $c_3$ have the maximum number of predecessors, this forces $c_4$ to have a third predecessor, contradicting the uniqueness of $u$.

\vskip.3\baselineskip
\noindent\emph{Case 2(d):} There are two edges internal to some $N_j(v)$ and $N_t(v)$ that share no endpoints (here we may have $j=t$ or $j\neq t$). 
\vskip.3\baselineskip

Denote the edges by $uw$ and $xy$, and assume $\{x,y\} \subseteq N_j(v)$ and $\{u,w\} \subseteq N_t(v)$ where $t \leq j$. Suppose either $t \geq 2$ or $t=1$ and $j\geq 3$. Since every vertex in $N_i(v)$ for $i\geq 2$ has exactly $2$ predecessors and $t\leq j$, if $t\geq 2$, then tracing back from $\{u,w\}$ to $v$ as in our previous such construction, we obtain a single-ended candle $H\subseteq G$.   If $t=1$ and $j\geq 3$, tracing back from $\{x,y\}$, we obtain a single ended candle $H\subseteq G$ as well (it is possible that the edge $uw$ is contained in the first level of $H$ or not; it does not matter for the argument). 

Let $N_1(v) = \{a_1, b_1, c_1\}$ and without loss of generality assume that $\{a_1, b_1\} = V(H)\cap N_1(v)$. As all vertices in $H$ have the maximum number of predecessors and $\delta(G) =3$, there must be at least one vertex not in $H$ that is a successor of $c_1$, call it $c_2$.  

Consider $\{a_2, b_2, c_2\} \subseteq N_2(v)$ where $\{a_2, b_2\} = V(H) \cap N_2(v)$.  Following the logic of Case 2(c), $c_2$ must have another predecessor other than $c_1$, and we may assume that $c_2$ is adjacent to $b_1$ (we note again here that, for the remainder of this paragraph, $b_1\in\{u,w\}$ is possible but does not affect the argument). Note that $c_2$ is not adjacent to $b_2$ because otherwise we have $t=j=2$, which is impossible since $uw$ and $xy$ share no endpoints, or $t=1$ and $j=2$. Hence, $b_2b_1c_2$ is a path of length $2$ which forces the existence of a vertex $c_3 \in N_3(v)$, where $c_3$ may be one of $x$ or $y$. But now this vertex must  have at least 3 predecessors, or else $c_1c_2c_3$ is a unique shortest path between $c_1$ and $c_3$, a contradiction.

The only remaining case is if $t=1$ and $j=2$.  Since $\delta(G)=3$, $\{x,y\}$ are the only possibilities for vertices that have no successors.  So $N_{\epsilon(v)}(v) = N_2(v) = \{x,y\}$, and $|V(G)| = 6$, contradicting that $G$ has an odd number of vertices.

\vskip.3\baselineskip
\noindent\emph{Case 2(e):} There are edges $xy$ and $yz$ both contained in some $N_j(v)$.
\vskip.3\baselineskip
In this case, $x, y,$ and $z$ are the only vertices with no successors, $j=\epsilon(v)$, and $N_j(v)=\{x,y,z\}$.   Note that $N_{j-1}(v)$ has at most 3 vertices as otherwise we have a predecessor of one of $x, y, z$ that is not a  predecessor of the others which gives an immediate contradiction.  But because $\deg(v)= 3$, by Lemma \ref{nondecreasingNjs} all distance sets have size at least $3$. Taking $N_{j-1}(v) =\{x^\prime, y^\prime, z^\prime\}$, the bipartite graph given by the edges between $\{x,y,z\}$ and $\{x',y',z'\}$ is isomorphic to $C_6$.  If $x^\prime$ is adjacent to $x$ and $y$, then $x^\prime y z$ is the unique shortest path of length 2 joining $x^\prime$ to $z$.
\end{proof}
This completes the proof of Theorem \ref{Characterization}.

\section{Combinatorial Orthogonality}

Our characterization of graphs with minimum edge density and two distinct eigenvalues in Theorem \ref{Characterization} is related to earlier work on combinatorial orthogonality and quadrangular graphs in \cite{MR1240965}, \cite{MR1648304}, and \cite{MR429637}. The existence of a combinatorially orthogonal matrix $A\in \mathcal{S}(G)$ can be viewed as a relaxation of the property $q(G)=2$, and so our characterization in Theorem \ref{Characterization} can be seen as a refinement of a similar characterization in  \cite{MR429637}.

Two vectors $x,y\in\mathbb{R}^n$ are \emph{combinatorially orthogonal} if $|\supp(x)\cap\supp(y)|\neq 1$, where $\supp(x)=\{i\,:\, x_i\neq 0\}$ is the \emph{support} of $x$. A matrix $A$ is \emph{combinatorially orthogonal} if each of its rows are pairwise combinatorially orthogonal, and each of its columns are pairwise combinatorially orthogonal. It is easy to see that orthogonal vectors $x$ and $y$ are combinatorially orthogonal, and that if $A$ is an orthogonal matrix, then $A$ is combinatorially orthogonal. Thus for graphs $G$ with $q(G)=2$, there is a combinatorially orthogonal matrix $A\in\mathcal{S}(G)$. The converse does not hold. For example, $A(K_{2,3})\in\mathcal{S}(K_{2,3})$ is combinatorially orthogonal, but $q(K_{2,3})=3$.

Unlike graphs with $q$-value equal to $2$, graphs $G$ for which $\mathcal{S}(G)$ contains a combinatorially orthogonal matrix are easily characterized.
\begin{lemma}\label{CombOrthThm}
If $G$ is a graph, then $\mathcal{S}(G)$ contains a combinatorially orthogonal matrix if and only if the matrix $A(G)+I$ is combinatorially orthogonal.
\end{lemma}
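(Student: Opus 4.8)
The plan is to reduce everything to the zero--nonzero pattern, since combinatorial orthogonality of two vectors, $|\supp(x)\cap\supp(y)|\neq 1$, depends only on their supports. Every matrix in $\mathcal{S}(G)$ has the same off-diagonal pattern (dictated by $E(G)$) and differs only in which diagonal entries vanish. Writing $B_i$ for the $i$th row of $B\in\mathcal{S}(G)$, we have $\supp(B_i)=N(i)\cup S_i$, where $S_i=\{i\}$ if $b_{ii}\neq 0$ and $S_i=\emptyset$ otherwise; thus $N(i)\subseteq\supp(B_i)\subseteq N[i]$. The matrix $A(G)+I$ is exactly the member of $\mathcal{S}(G)$ whose diagonal is everywhere nonzero, so its $i$th row has support $N[i]$. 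Finally, since all matrices here are symmetric, row $i$ and column $i$ coincide as vectors, so the row and column conditions are equivalent; I would therefore check only the rows.

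One direction is immediate. The diagonal entries of $A(G)+I$ equal $1$ and its off-diagonal pattern matches $A(G)$, so $A(G)+I\in\mathcal{S}(G)$; hence if $A(G)+I$ is combinatorially orthogonal, then $\mathcal{S}(G)$ contains a combinatorially orthogonal matrix.

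For the converse, suppose some $B\in\mathcal{S}(G)$ is combinatorially orthogonal, and fix distinct vertices $i\neq k$; the goal is $|N[i]\cap N[k]|\neq 1$. I would split on adjacency. If $i$ and $k$ are adjacent, then $i\in N[k]$ and $k\in N[i]$, so $\{i,k\}\subseteq N[i]\cap N[k]$ and the intersection already has size at least $2$; this case uses no hypothesis on $B$. If $i$ and $k$ are nonadjacent, the key observation is that $i\notin N[k]$ and $k\notin N[i]$ (precisely because $i\neq k$ and $ik\notin E(G)$), so the only indices by which $\supp(B_i)$ and $\supp(B_k)$ can exceed $N(i)$ and $N(k)$ are invisible to the cross-intersection. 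Consequently
\[
\supp(B_i)\cap\supp(B_k)=N(i)\cap N(k)=N[i]\cap N[k].
\]
Since $B$ is combinatorially orthogonal, the left-hand side has cardinality $\neq 1$, and hence so does $N[i]\cap N[k]$.

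The main (indeed essentially the only) point is the nonadjacent case, and it rests on the single structural remark that for distinct nonadjacent vertices the diagonal entries cannot enter the intersection of the two rows' supports. This decouples the diagonal choices from the cross-intersections, so the support-intersection of any $B\in\mathcal{S}(G)$ on a nonadjacent pair agrees with that of $A(G)+I$, while adjacent pairs are automatically combinatorially orthogonal for $A(G)+I$. Ranging over all pairs $i\neq k$ shows the rows of $A(G)+I$ are pairwise combinatorially orthogonal, and symmetry promotes this to the columns, so $A(G)+I$ is combinatorially orthogonal.
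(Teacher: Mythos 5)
Your proof is correct and follows essentially the same route as the paper's: the forward direction is the observation that $A(G)+I\in\mathcal{S}(G)$, and the converse splits on adjacency, with adjacent pairs handled by $\{u,v\}\subseteq N[u]\cap N[v]$ and nonadjacent pairs by noting that the support intersection of the corresponding rows of \emph{any} matrix in $\mathcal{S}(G)$ equals $N(u)\cap N(v)=N[u]\cap N[v]$. Your explicit bookkeeping with the sets $S_i$ just makes transparent the step the paper states more tersely.
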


\begin{proof}
If the matrix $A(G)+I$ is combinatorially orthogonal, then $\mathcal{S}(G)$ contains the combinatorially orthogonal matrix $A(G)+I$. Now suppose $\mathcal{S}(G)$ contains a combinatorially orthogonal matrix $M$. We show that $B=A(G)+I$ is combinatorially orthogonal. Consider two columns of $B$ corresponding to distinct vertices $u$ and $v$. If $uv\in E(G)$, then the $2\times 2$ submatrix of $B$ lying on rows and columns corresponding to $u$ and $v$ is the $2\times 2$ all-ones matrix. Therefore, the columns of $B$ corresponding to $u$ and $v$ are combinatorially orthogonal. If $uv\notin E(G)$, then $|N(u)\cap N(v)|\neq 1$. Otherwise, if $N(u)\cap N(v)=\{w\}$, then the columns of $M$ corresponding to $u$ and $v$ share only one nonzero entry which contradicts the fact that $M$ is combinatorially orthogonal. Thus $B$ is combinatorially orthogonal.
\end{proof}
Note that Lemma \ref{CombOrthThm} can be rephrased in terms of the structure of $G$. There is a combinatorially orthogonal matrix in $\mathcal{S}(G)$ if and only if every path of length $2$ in $G$ is contained in either a $C_3$ or a $C_4$ (the implication for graphs $G$ with $q(G)=2$ is given in Corollary \ref{UniquePath}).

In \cite{MR429637}, Reid and Thomassen state that a graph $G$ has property $p(r,s)$ if $G$ has a path of length $r$, and every path of length $r$ in $G$ is contained in a cycle of length $s$ (here $s\geq 3$ as our graphs are simple). Using this terminology, the previous paragraph establishes that there is a combinatorially orthogonal matrix in $\mathcal{S}(G)$ if and only if $G$ has property $p(2,\leq 4)$. Reid and Thomassen prove that if $G$ is a graph on $n$ vertices with property $p(2, \leq 4)$, then $|E(G)|\geq 2n-4$ (Theorem 4.1 in \cite{MR429637}). Moreover they show that if $G$ has property $p(2,4)$, then $|E(G)|\geq 2n-4$, and the bound is sharp (Corollary 4.2 in \cite{MR429637}). This bound is exactly the same as our bound in Lemma \ref{Lower1}.

The following terminology is taken from \cite{MR1648304}. A vertex $x$ of a graph $G$ is \emph{condensable} if $N(x)=\{u,v\}$, $N(u)=N(v)$, and $\deg(u)\geq 3$. A graph $H$ is a \emph{condensing} of $G$ if $H=G-x$ for some condensable vertex $x$ in $G$. And $H$ is a \emph{condensation} of $G$ if there is some sequence $H=G_1,G_2,\ldots,G_k=G$ so that for each $1\leq i\leq k-1$, $G_i$ is a condensing of $G_{i+1}$.  Reid and Thomassen prove that if $G$ is a connected graph with property $p(2,4)$ and $|E(G)|=2n-4$, then either $G=Q_3$, or $C_4$ is a condensation of $G$ (this is Theorem 4.5 in \cite{MR429637}, re-stated as Theorem 3.1 in \cite{MR1648304}). Note that all graphs $G$ for which $C_4$ is a condensation of $G$ have property $p(2,4)$.

In Theorem \ref{Characterization} we characterize the connected graphs $G$ with $q(G)=2$ and $|E(G)|=2n-4$ to be $Q_3$ together with the double-ended candles. It is unsurprising to note that $C_4$ is a condensation of each $G_k$. We have noted that $K_{2,3}$ has property $p(2,4)$, but $q(K_{2,3})=3$. This implies that not all graphs $G$ that have $C_4$ as a condensation have $q(G)=2$. Also note that there are connected graphs $G$ with $q(G)=2$ that do not have property $p(2,4)$. The single-ended candles $G'_k$ are such examples. In $G'_k$ the path $2k,2k+1,2k-1$ is contained in a $C_3$ but not in a $C_4$. From Theorem \ref{Characterization}, and Theorem 4.5 in \cite{MR429637}, we have the following immediate corollary.
\begin{corollary}\label{C4Characterization}
If $C_4$ is a condensation of $G$, and $q(G)=2$, then $G$ is a double-ended candle. 
\end{corollary}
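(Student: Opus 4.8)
The plan is to derive Corollary \ref{C4Characterization} as a direct combination of two already-established facts: Theorem 4.5 of \cite{MR429637} (re-stated in the discussion above), which constrains the structure of graphs having $C_4$ as a condensation, and our own Theorem \ref{EvenCharacterization}, which characterizes the connected graphs with $q(G)=2$ meeting the bound $|E(G)|=2n-4$. The key observation is that the class of graphs for which $C_4$ is a condensation sits squarely inside both edge-count and eigenvalue frameworks, so we should be able to intersect the two characterizations rather than prove anything new from scratch.

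First I would record the structural consequences of the hypotheses. Suppose $C_4$ is a condensation of $G$ and $q(G)=2$. As noted in the paragraph preceding the corollary, every graph $G$ for which $C_4$ is a condensation has property $p(2,4)$; in particular $G$ has property $p(2,\le 4)$. By Theorem 4.1 of \cite{MR429637} this already forces $|E(G)|\ge 2n-4$. The crucial point is that condensation preserves the edge count in the right way: each condensing step removes a single vertex of degree $2$ and its two edges, so each step reduces $n$ by $1$ and $|E|$ by $2$. Since $C_4$ itself satisfies $|E|=4=2\cdot 4-4$, unwinding the condensation sequence $C_4=G_1,\dots,G_k=G$ yields $|E(G)|=2n-4$ exactly. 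Thus $G$ is a connected graph with $q(G)=2$ and $|E(G)|=2n-4$, and in particular $n$ must be even.

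Next I would apply Theorem \ref{EvenCharacterization} to conclude that $G$ is either $Q_3$ or a double-ended candle. To finish, I would rule out $Q_3$: one checks directly that $Q_3$ is $3$-regular and has no condensable vertex (a condensable vertex must have degree $2$, whereas $\delta(Q_3)=3$), so $C_4$ cannot be a condensation of $Q_3$. Hence $G$ must be a double-ended candle, which is exactly the claimed conclusion. The fact that $C_4$ genuinely is a condensation of each $G_k$ was already observed in the surrounding discussion, so the characterization is tight.

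The main obstacle, modest as it is, lies in verifying the edge-count bookkeeping for the condensation sequence and in confirming that $Q_3$ admits no condensable vertex; both are routine once the definitions of condensable vertex and condensing are unpacked. Everything else is an immediate appeal to the two cited characterizations, so the proof is essentially a one-line deduction: the hypotheses put $G$ into the extremal class of Theorem \ref{EvenCharacterization}, and the degree obstruction eliminates $Q_3$, leaving precisely the double-ended candles.
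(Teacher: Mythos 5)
Your proposal is correct and is essentially the paper's intended argument: the paper presents this as an ``immediate corollary'' of Theorem \ref{Characterization} and Theorem 4.5 of \cite{MR429637}, and your route --- computing $|E(G)|=2n-4$ by unwinding the condensation sequence, invoking the even-case characterization, and eliminating $Q_3$ because it is $3$-regular and hence has no condensable vertex --- is exactly the deduction being left implicit. The only cosmetic remark is that your appeal to Theorem 4.1 of \cite{MR429637} for the inequality $|E(G)|\ge 2n-4$ is superfluous, since your condensation bookkeeping already gives the exact count.
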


In \cite{MR1648304}, Gibson and Zhang study combinatorially orthogonal matrices and quadrangular graphs. A graph is \emph{quadrangular} if $|N(x)\cap N(y)|\neq 1$ for all distinct $x,y$. Note that connected quadrangular graphs are exactly the graphs with property $p(2,4)$. The double-ended candles $G_k$ are quadrangular with $q(G_k)=2$. They are also bipartite. Gibson and Zhang extend the results from \cite{MR429637} above by showing that if $G\notin\{K_1,K_4\}$ is a connected non-bipartite quadrangular graph on $n$ vertices, then $|E(G)|\geq 2n-1$. Moreover $G$ meets this bound if and only if $G=K_6-C_4$ or $K_5-e$ is a condensation of $G$ (Theorem 4.1 in \cite{MR1648304}). From \cite{MR3904092} or \cite{Butler} we see $q(K_6-C_4)=2$. It would be interesting to determine the graphs $G$ with $q(G)=2$ for which $K_5-e$ is a condensation of $G$.

Finally, we note a connection to a result of Beasley, Brualdi, and Shader \cite{MR1240965}. An $n\times n$ matrix $A$ is \emph{partly decomposable} if there exist permutation matrices $P_1,P_2$ so that
\[
P_1AP_2=\left[\begin{array}{cc}
A_1 & 0\\
X & A_2
\end{array}\right]
\]
where the $A_i$ are square nonzero matrices. We say $A$ is \emph{fully indecomposable} if $A$ is not partly decomposable. Note that for matrices $A\in\mathcal{S}(G)$, $A$ is fully indecomposable if and only if $G$ is connected. In \cite{MR1240965}, it is shown that a fully indecomposable combinatorially orthogonal $n\times n$ matrix with $n\geq 2$ must have at least $4n-4$ nonzero entries, and the matrices that meet this bound are characterized (Theorem 2.2 in \cite{MR1240965}). This theorem applies to all matrices, not just symmetric matrices. However, if we subtract $n$ from their bound to account for the diagonal, and divide by $2$, we infer that if $A\in\mathcal{S}(G)$ for a connected graph $G$, then $|E(G)|\geq (3/2)n-2$. Comparing to the bound in Theorem \ref{Characterization}, we observe that no connected graph $G$ with $q(G)=2$ meets this bound.

\section{Conclusion and Future Directions}

We discuss what Theorem \ref{Characterization} tells us about determining which graphs $G$ have $q(G)=2$. We illustrate by considering graphs of order $5$ and $6$.

For graphs on $5$ vertices, the single-ended candle $G_2'$ is the unique connected graph with $7$ edges that has two distinct eigenvalues. So any other graph $G$ on $5$ vertices with at most $7$ edges must have $q(G)>2$. It remains to consider the graphs with at least $8$ edges. These graphs have complements with at most $2$ edges, and it is easy to see that each is isomorphic to a supergraph of $G_2'$. Orthogonal matrices in $\mathcal{S}(G_k)$ were constructed in Section \ref{Constructions}, and orthogonal matrices in $\mathcal{S}(G'_k)$ can be constructed similarly. For $n=5$ we have
\[
M=\left[\begin{array}{rrrrr}
0 & \sqrt{2} & \sqrt{2} & 0 & 0\\
\sqrt{2} & 0 & 0 & -1 & 1\\
\sqrt{2} & 0 & 0 & 1 & -1\\
0 & -1 & 1 & 1 & 1\\
0 & 1 & -1 & 1 & 1
\end{array}\right]\in\mathcal{S}(G_2').
\]
We see that $M$ has spectrum $\{(-2)^{(2)},2^{(3)}\}$, and a routine calculation verifies that $M$ has the Strong Spectral Property (see \cite{MR3665573} pg 10--11 for definitions). It follows from Theorem 10 in \cite{MR3665573} that for every supergraph $G$ of $G_2'$, there is a matrix in $\mathcal{S}(G)$ with spectrum $\{(-2)^{(2)},2^{(3)}\}$. Thus $q(G)=2$ for every supergraph $G$ of $G_2'$. These observations can be summarized as follows.
\begin{corollary}\label{5Vertices}
If $G$ be a connected graph on $5$ vertices, then $q(G)=2$ if and only if $G$ is a supergraph of $G_2'$.
\end{corollary}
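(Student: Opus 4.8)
The plan is to prove the two directions separately, drawing almost entirely on results already established in the excerpt. For the forward direction, suppose $G$ is a connected graph on $5$ vertices with $q(G)=2$. Since $n=5$ is odd, Lemma \ref{Lower1} together with Corollary \ref{Lower2} gives $|E(G)|\geq 2n-3=7$. If $|E(G)|=7$, then $G$ meets the bound of Corollary \ref{Lower2} exactly, so by Theorem \ref{OddCharacterization} the graph $G$ must be a single-ended candle; the only single-ended candle on $5$ vertices is $G_2'$, so $G=G_2'$, which is trivially a supergraph of $G_2'$. If $|E(G)|\geq 8$, I would argue structurally: the complement $\overline{G}$ has at most $\binom{5}{2}-8=2$ edges, and I want to show every such $G$ contains $G_2'$ as a spanning subgraph. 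The cleanest route is to observe that $G_2'=K_5$ minus three edges (it has $7$ of the $10$ possible edges), identify exactly which triples of missing edges yield $G_2'$, and then verify that any graph obtained by deleting at most two edges from $K_5$ necessarily contains one of these copies. I expect this bookkeeping to be the main obstacle, though it is a finite check: up to isomorphism there are only a handful of graphs with $8$, $9$, or $10$ edges on five vertices, and each must be checked to be a supergraph of $G_2'$.

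For the reverse direction, the key tool is the matrix $M\in\mathcal{S}(G_2')$ exhibited just before the corollary, which has spectrum $\{(-2)^{(2)},2^{(3)}\}$ and the Strong Spectral Property. The plan is to invoke Theorem 10 of \cite{MR3665573}, which guarantees that the Strong Spectral Property is preserved under passing to supergraphs while retaining the same spectrum: for any supergraph $G\supseteq G_2'$ there is a matrix $B\in\mathcal{S}(G)$ with $\sigma(B)=\{(-2)^{(2)},2^{(3)}\}$. Since this spectrum has exactly two distinct eigenvalues, $q(G)\leq 2$; and as $G$ is connected with more than one vertex it is nontrivial, so $q(G)\geq 2$, giving $q(G)=2$. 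This direction is essentially immediate once the Strong Spectral Property of $M$ is granted, so the only real work is the routine verification (already asserted in the excerpt) that $M$ has that property.

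Combining the two directions yields the stated equivalence. The forward direction rests on the edge-density characterization of Theorem \ref{Characterization} specialized to $n=5$, which forces a graph at the extremal edge count to be precisely $G_2'$ and leaves only the finite supergraph check for higher edge counts; the reverse direction rests on the Strong Spectral Property propagating $q=2$ upward through the supergraph lattice. The sharpest point is reconciling the two: one must confirm that the collection of connected $5$-vertex graphs with $q=2$ is exactly the upward-closed family generated by the single graph $G_2'$, i.e. that no connected graph on $5$ vertices with $q=2$ fails to contain $G_2'$, which is guaranteed because any such graph has at least $7$ edges and the extremal case is pinned down by Theorem \ref{OddCharacterization}.
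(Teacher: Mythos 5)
Your proposal is correct and follows essentially the same route as the paper: the lower bound $|E(G)|\geq 7$ from Corollary \ref{Lower2} plus the extremal characterization in Theorem \ref{OddCharacterization} handles the $7$-edge case, the observation that every connected $5$-vertex graph with at least $8$ edges (complement with at most $2$ edges) contains $G_2'$ handles the rest of the forward direction, and the Strong Spectral Property of the matrix $M$ together with Theorem 10 of \cite{MR3665573} gives the converse. The finite complement check you flag as the ``main obstacle'' is genuinely easy, since $\overline{G_2'}\cong P_3\cup K_2$ contains every $5$-vertex graph with at most two edges as a subgraph.
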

This is not a new result, but Theorem \ref{Characterization} and the Strong Spectral Property give a simple way to state and understand it.

We now consider graphs on $6$ vertices and again our results are not new. The $q$-values of all connected graphs on $6$ vertices were calculated in \cite{MR3904092}. However, viewing these results in the context of Theorem \ref{Characterization} gives a new perspective. For graphs on $6$ vertices, the double-ended candle $G_3$ is the unique connected graph with $8$ edges that has two distinct eigenvalues. Three non-isomorphic graphs can be obtained by adding an edge to $G_3$; they are given in Figure \ref{6VtxFig}.
\begin{figure}[h!]
\centering
\begin{tikzpicture}[scale=0.65, vrtx/.style args = {#1/#2}{%
      circle, draw, fill=black, inner sep=0pt,
      minimum size=6pt, label=#1:#2}]
\node (0) [vrtx=left/] at (2,0) {};
\node (1) [vrtx=left/] at (1,1.73) {};
\node (2) [vrtx=left/] at (-1,1.73) {};
\node (3) [vrtx=left/] at (-2,0) {};
\node (4) [vrtx=left/] at (-1,-1.73) {};
\node (5) [vrtx=left/] at (1,-1.73) {};
\node (l1) [] at (0,-2.5) {$G_3^{(1)}$};
\draw (0) edge (1);
\draw (2) edge (1);
\draw (2) edge (3);
\draw (4) edge (3);
\draw (4) edge (5);
\draw (0) edge (5);
\draw (2) edge (5);
\draw (4) edge (1);
\draw (1) edge (5);
\node (01) [vrtx=left/] at (2+6,0) {};
\node (11) [vrtx=left/] at (1+6,1.73) {};
\node (21) [vrtx=left/] at (-1+6,1.73) {};
\node (31) [vrtx=left/] at (-2+6,0) {};
\node (41) [vrtx=left/] at (-1+6,-1.73) {};
\node (51) [vrtx=left/] at (1+6,-1.73) {};
\node (l2) [] at (0+6,-2.5) {$G_3^{(2)}$};
\draw (01) edge (11);
\draw (21) edge (11);
\draw (21) edge (31);
\draw (41) edge (31);
\draw (41) edge (51);
\draw (01) edge (51);
\draw (21) edge (51);
\draw (41) edge (11);
\draw (11) edge (31);
\node (02) [vrtx=left/] at (2+12,0) {};
\node (12) [vrtx=left/] at (1+12,1.73) {};
\node (22) [vrtx=left/] at (-1+12,1.73) {};
\node (32) [vrtx=left/] at (-2+12,0) {};
\node (42) [vrtx=left/] at (-1+12,-1.73) {};
\node (52) [vrtx=left/] at (1+12,-1.73) {};
\node (l3) [] at (0+12,-2.5) {$G_3^{(3)}$};
\draw (02) edge (12);
\draw (22) edge (12);
\draw (22) edge (32);
\draw (42) edge (32);
\draw (42) edge (52);
\draw (02) edge (52);
\draw (22) edge (52);
\draw (42) edge (12);
\draw (32) edge (02);
\end{tikzpicture}
\caption{Graphs $G_3^{(1)}$, $G_3^{(2)}$, and $G_3^{(3)}$.}\label{6VtxFig}
\end{figure}
Note that $G_3^{(2)}$ contains a pair of vertices at distance $2$ joined by a unique shortest path, which forces $q(G_3^{(2)})>2$. It follows that no matrix in $\mathcal{S}(G_3)$ has exactly two distinct eigenvalues and the Strong Spectral Property. So we are unable to replicate the simplicity of Corollary \ref{5Vertices}. However, from Theorem \ref{Characterization} we can conclude that of the $60$ connected graphs on $6$ vertices with at most $8$ edges, $G_3$ is the only graph $G$ with $q(G)=2$. It remains to determine which of the $52$ connected graphs on $6$ vertices with at least $9$ edges have two distinct eigenvalues.

One approach to finding the remaining graphs $G$ on $6$ vertices with $q(G)=2$ is to find graphs $G$ with few edges that admit a matrix with $q(G)=2$ that have the Strong Spectral Property. Three such graphs are given in Figure \ref{6VtxSSP}.
\begin{figure}[h!]
\centering
\begin{tikzpicture}[scale=0.65, vrtx/.style args = {#1/#2}{%
      circle, draw, fill=black, inner sep=0pt,
      minimum size=6pt, label=#1:#2}]
\node (0) [vrtx=left/] at (0,2) {};
\node (1) [vrtx=left/] at (0,0) {};
\node (2) [vrtx=left/] at (-1.25,1) {};
\node (3) [vrtx=left/] at (-2.75,1) {};
\node (4) [vrtx=left/] at (-4,0) {};
\node (5) [vrtx=left/] at (-4,2) {};
\node (l1) [] at (-2,-0.75) {$S_1$};
\draw (0) edge (1);
\draw (2) edge (0);
\draw (0) edge (5);
\draw (1) edge (2);
\draw (1) edge (4);
\draw (2) edge (3);
\draw (3) edge (5);
\draw (4) edge (3);
\draw (4) edge (5);
\node (01) [vrtx=left/] at (2+3.5,1) {};
\node (11) [vrtx=left/] at (0.75+3.5,1) {};
\node (21) [vrtx=left/] at (-0.75+3.5,0) {};
\node (31) [vrtx=left/] at (-0.75+3.5,1) {};
\node (41) [vrtx=left/] at (-0.75+3.5,2) {};
\node (51) [vrtx=left/] at (-2+3.5,1) {};
\node (l2) [] at (0+3,-0.75) {$S_2$};
\draw (01) edge (21);
\draw (01) edge (41);
\draw (11) edge (21);
\draw (11) edge (31);
\draw (11) edge (41);
\draw (51) edge (21);
\draw (51) edge (31);
\draw (51) edge (41);
\draw (21) edge (31);
\draw (41) edge (31);
\node (02) [vrtx=left/] at (2+9,1) {};
\node (12) [vrtx=left/] at (0.75+9,1) {};
\node (22) [vrtx=left/] at (0+9,0) {};
\node (32) [vrtx=left/] at (0+9,2) {};
\node (42) [vrtx=left/] at (-0.75+9,1) {};
\node (52) [vrtx=left/] at (-2+9,1) {};
\node (l3) [] at (0+9,-0.75) {$S_3$};
\draw (02) edge (12);
\draw (02) edge (22);
\draw (02) edge (32);
\draw (12) edge (32);
\draw (12) edge (22);
\draw (42) edge (22);
\draw (42) edge (32);
\draw (52) edge (22);
\draw (52) edge (32);
\draw (52) edge (42);
\end{tikzpicture}
\caption{Graphs $S_1$, $S_2$, and $S_3$.}\label{6VtxSSP}
\end{figure}
Each of $S_1$, $S_2$, and $S_3$ admits a matrix with $2$ eigenvalues and the Strong Spectral Property. Of the $23$ connected graphs $G$ on $6$ vertices with at least $9$ edges and $q(G)=2$, $20$ are supergraphs of one of $S_1$, $S_2$, or $S_3$. The remaining $3$ graphs are $G_3^{(1)}$, $G_3^{(3)}$, and the graph $G_3^{(4)}$ shown in Figure \ref{lastone}.
\begin{figure}[h!]
\centering
\begin{tikzpicture}[scale=0.65, vrtx/.style args = {#1/#2}{%
      circle, draw, fill=black, inner sep=0pt,
      minimum size=6pt, label=#1:#2}]
\node (0) [vrtx=left/] at (2,0) {};
\node (1) [vrtx=left/] at (1,1.73) {};
\node (2) [vrtx=left/] at (-1,1.73) {};
\node (3) [vrtx=left/] at (-2,0) {};
\node (4) [vrtx=left/] at (-1,-1.73) {};
\node (5) [vrtx=left/] at (1,-1.73) {};

\node (l1) [] at (0,-2.5) {$G_3^{(4)}$};
\draw (0) edge (1);
\draw (2) edge (1);
\draw (2) edge (3);
\draw (4) edge (3);
\draw (4) edge (5);
\draw (0) edge (5);
\draw (2) edge (5);
\draw (4) edge (1);
\draw (1) edge (5);
\draw (2) edge (4);
\end{tikzpicture}
\caption{The graph $G_3^{(4)}$.}\label{lastone}
\end{figure}
Each of $G_3^{(1)}$, $G_3^{(3)}$, and $G_3^{(4)}$ has a supergraph with $q$-value greater than $2$. So none of them admit a matrix with $2$ distinct eigenvalues and the Strong Spectral Property. Interestingly, all three are supergraphs of $G_3$. Each of $G_3^{(1)}$, $G_3^{(3)}$, and $G_3^{(4)}$ can be shown to allow two distinct eigenvalues by finding a compatible matrix with only $2$ distinct eigenvalues. Currently, we have no other way to establish this. 

Further investigation of the $q$-values of the supergraphs of $G_k$ and $G'_k$ for larger values of $k$ seems called for. For example, if $G$ is obtained from $G_k$, or $G_k'$, by adding any subset of the non-edges between the pairs of vertices in $N_i(1)$, what is $q(G)$? The graphs in Figure \ref{G3andG4withedge} both have $q$-value equal to $2$; do all such $G$ have $q(G)=2$?

\section*{Acknowledgements}

Shaun M. Fallat was supported in part by an NSERC Discovery Research Grant, Application No.: RGPIN--2019--03934. Michael Tait was supported in part by NSF grant DMS-2011553.

This project began as part of the ``Inverse Eigenvalue Problems for Graphs and Zero Forcing” Research Community sponsored by the American Institute of Mathematics (AIM). We thank AIM for their support, and we thank the organizers and participants for contributing to this stimulating research experience.

We thank Bryan Shader for very helpful discussions and insights on this project.

\bibliographystyle{plain}
\bibliography{bibliography}

\end{document}